\definecolor{colorcita}{RGB}{21,86,130}
\definecolor{colorref}{RGB}{5,10,177}
\definecolor{colorweb}{RGB}{177,6,38}
\newtheorem{theorem}{Theorem}[section]
\newtheorem{lemma}[theorem]{Lemma}
\newtheorem{proposition}[theorem]{Proposition}
\newtheorem{corollary}[theorem]{Corollary}
\theoremstyle{definition}
\newtheorem{remark}[theorem]{Remark}
\newcommand{\punkt}{\,\begin{picture}(-1,1)(-1,-3)\circle*{2.5}\end{picture}\;\; }
\newcommand{\T}{\mathbb{T}}
\newcommand{\N}{\mathbb{N}}
\newcommand{\C}{\mathbb{C}}
\newcommand{\E}{\mathbb{E}}
\DeclareMathOperator{\mon}{mon}
\begin{document}
\title{Hausdorff-Young type inequalities for vector-valued Dirichlet series}

\author{Daniel Carando \thanks{Departamento de Matem\'atica, Facultad de Cs. Exactas y Naturales,
		Universidad de Buenos Aires and IMAS-UBA-CONICET, Int.~G\"uiraldes s/n, 1428, Buenos Aires, Argentina (dcarando@dm.uba.ar). Supported by CONICET-PIP 11220130100329CO and ANPCyT PICT 2015-2299.}\and
Felipe Marceca\thanks{Departamento de Matem\'atica, Facultad de Cs. Exactas y Naturales,
	Universidad de Buenos Aires and IMAS-UBA-CONICET, Int.~G\"uiraldes s/n, 1428, Buenos Aires, Argentina (fmarceca@dm.uba.ar). Supported by a CONICET doctoral fellowship,  CONICET-PIP 11220130100329CO and ANPCyT PICT 2015-2299.} \and
Pablo Sevilla-Peris\thanks{Instituto Universitario de Matem\'atica Pura y Aplicada,
Universitat Polit\`{e}cnica de Val\`encia, cmno Vera s/n, 46022,
Val\`encia, Spain (psevilla@mat.upv.es) Supported by MINECO and FEDER projects MTM2014-57838-C2-2-P and MTM2017-83262-C2-1-P}}

\date{}

\maketitle

\begin{abstract}

We study Hausdorff-Young type inequalities for vector-valued Dirichlet series which allow to compare the norm of a Dirichlet series in the Hardy space $\mathcal{H}_{p} (X)$ with the $q$-norm of its coefficients.
In order to obtain inequalities completely analogous to the scalar case, a Banach space must satisfy the restrictive notion of Fourier type/cotype. We show that  variants of these inequalities hold for the much broader range of spaces enjoying type/cotype.
We also consider Hausdorff-Young type inequalities for functions defined on the infinite torus $\mathbb{T}^{\infty}$ or the boolean cube $\{-1,1\}^{\infty}$.
\end{abstract}

\thispagestyle{empty}

\section{Introduction}

The Hilbert space of Dirichlet series $\mathcal{H}_{2}$ was first defined in  \cite{HeLiSe97} as those  $\sum a_{n} n^{-s}$ for which $(a_{n})_{n} \in \ell_{2}$. This was later extended by Bayart, who in
\cite{Ba02} defined a whole scale of Hardy spaces of Dirichlet series $\mathcal{H}_{p}$ for $1 \leq p \leq \infty$. Unlike the Hilbert space case, there is no general principle that allows to decide whether or not a
Dirichlet series belongs to a given Hardy space just by looking at the size of the coefficients, but the classical Haussdorff-Young inequalities are a useful tool in this purpose. For each $1 \leq p \leq \infty$ the
spaces $\mathcal{H}_{p}$ and $H_{p}(\mathbb{T}^{\infty})$ (precise definitions are given below) are isometrically isomorphic. A rather straightforward computation (using, for example, standard interpolation arguments) shows that Hausdorff-Young inequalities also hold for these
spaces and this immediately gives (here $r'$ denotes the conjugate of $1 \leq r \leq \infty$ so that $\frac{1}{r} + \frac{1}{r'}=1$)
\begin{equation} \label{tragica}
 \Big\Vert \sum a_{n} n^{-s} \Big\Vert_{\mathcal{H}_{p'}}
\leq C \Big( \sum_{n =1}^{\infty} \vert a_{n} \vert^{p} \Big)^{\frac{1}{p}}
\end{equation}
for every $1 \leq p \leq 2$ and
\begin{equation} \label{grande}
\Big( \sum_{n=1}^{\infty} \vert a_{n} \vert^{q} \Big)^{\frac{1}{q}}
\leq C \Big\Vert \sum a_{n} n^{-s} \Big\Vert_{\mathcal{H}_{q'}}
\end{equation}
for all $2 \leq q \leq \infty$.

Hardy spaces $\mathcal{H}_{p}(X)$ of vector-valued Dirichlet series (that is, the coefficients $a_{n}$ belong to some Banach space $X$) have been defined and studied in \cite{CaDeSe14,DePe18}. Here the problem becomes more complicated. Once again, each one of these spaces is isometrically isomorphic to the corresponding $H_{p}(\mathbb{T}^{\infty}, X)$, but in this case the Haussdorff-Young inequalities do not hold for an arbitrary Banach space. Fourier type and cotype are the notions to get vector-valued Haussdorff-Young inequalities, and for spaces enjoying those properties (again, see below for the definition) we easily get in Propositions~\ref{proposition3-cotipo} and~\ref{proposition3}  inequalities that are analogous to  \eqref{tragica} and \eqref{grande}.
However, these properties are very restrictive in the sense that a Banach space has Fourier type or cotype with exponents $p$ or $q$ which are generally worse than those for the usual (Rademacher) type and cotype. Furthermore, the actual values of $p$ and $q$ are often unknown.

In Theorems~\ref{ponzio} and~\ref{maidana} we show that Banach spaces with  cotype $q$ (respectively type $p$) satisfy variants of Hausdorff-Young  inequalities which relate the  norm of a Dirichlet series with a weighted $\ell_q$ norm of the coefficients (respectively, a weighted $\ell_p$ norm). Analogous inequalities are obtained for functions on $\mathbb{T}^{\infty}$ and Walsh functions (i.e., functions on the infinite Boolean cube).

The main tool for these results is a polynomial reformulation  of type and cotype (Theorems~\ref{cotimplicapolcot} and \ref{typimplicapoltyp}).
More precisely, we prove that Rademacher cotype is equivalent to the notion of {hypercontractive homogeneous cotype} defined in \cite{CaDeSe16} (see Section~\ref{sec-tyc_pol} for the definition). An equivalence with an inequality concerning Walsh polynomials is also established. Analogously,  Theorem~\ref{typimplicapoltyp} shows the corresponding results for type and its hypercontractive homogeneous version. In~\cite{DeMaSc_12} variants of vector-valued Bonhenblust-Hille inequality with operators are shown to hold for Banach lattices nontrivial cotype. In~\cite{CaDeSe16}, results regarding monomial convergence sets and multipliers for Hardy spaces were presented for Banach spaces with nontrivial cotype and local unconditional structure or with Fourier cotype. As mentioned in Remark~\ref{loquevaleahora}, thanks to Theorem ~\ref{cotimplicapolcot} all these results readily extend to Banach spaces with nontrivial cotype.

The proof of Theorems~\ref{cotimplicapolcot} and~\ref{typimplicapoltyp} are the most technical part of the article and are developed in Section~\ref{sec1}. A crucial part is to show that it suffices to prove the desired inequality for tetrahedral polynomials, where the variables appear with at most power $1$. We feel that this methodology (to reduce an inequality for general polynomials to an inequality for tetrahedral polynomials) can be useful in different situations and is, then, interesting on its own.

\section{Definitions and first results}

We denote by $dz$ the normalized Lebesgue measure on the  infinite dimensional polytorus $\mathbb{T}^{\infty} = \prod_{k=1}^\infty \mathbb{T}$, i.e., the countable product measure  of the normalized Lebesgue measure
on $\mathbb{T}$. For any  multi index $\alpha = (\alpha_1, \dots, \alpha_n,0, \ldots ) \in \mathbb{Z}^{(\mathbb{N})}$ (all finite sequences in $\mathbb{Z}$)
the $\alpha$th Fourier coefficient $\hat{f}(\alpha)$ of
  $f  \in L_{1} (\mathbb{T}^{\infty}, X)$ is given by
\[
\hat{f}(\alpha) = \int_{\mathbb{T}^{\infty}} f(z) z^{- \alpha} dz\,.
\]
For $1 \le p < \infty$, the $X$-valued Hardy space on $\mathbb{T}^\infty$ is the subspace of $L_{p} (\mathbb{T}^{\infty}, X)$ defined as
\[
H_{p}(\mathbb{T}^{\infty},X) = \Big\{ f \in  L_{p} (\mathbb{T}^{\infty},X) \,:\,\, \hat{f}(\alpha) = 0 \,  , \, \,\, \,
\forall \alpha \in \mathbb{Z}^{(\mathbb{N})} \setminus \mathbb{N}_{0}^{(\mathbb{N})} \Big\}
\]
(where $ \mathbb{N}_{0}^{(\mathbb{N})}$ stands for the set of $\alpha$s in $\mathbb{Z}^{(\mathbb{N})}$ with $\alpha_{i} \geq 0$ for every $i$).
Observe that each $f \in H_{p}(\mathbb{T}^{\infty},X)$  is uniquely determined by its Fourier coefficients. With this in mind we consider the $X$-valued Bohr transform $\mathfrak{B}_X$ that to each $f$ assigns the Dirichlet series $\sum a_{n} n^{-s}$ where $a_{n} = \hat{f}(\alpha)$ if $n = p_{1}^{\alpha_{1}} \cdots p_{k}^{\alpha_{k}}$ is the prime number decomposition of $n$.
Then the Hardy space $\mathcal{H}_p(X)$ of Dirichlet series in $X$ is defined as  the  image of $H_{p}(\mathbb{T}^{\infty},X)$ under the Bohr transform $\mathfrak{B}_X$.
This vector space of Dirichlet series together with the norm
\begin{equation} \label{armani}
\|D\|_{\mathcal{H}_p(X)}= \|\mathfrak{B}_X^{-1}(D)\|_{H_{p}(\mathbb{T}^{\infty},X)}
\end{equation}
forms a Banach space. In other words, Bohr's transform  gives the isometric identification
\[
\mathcal{H}_p(X) = H_{p}(\mathbb{T}^{\infty},X)\, \text{ for } 1 \le p < \infty.
\]
A detailed account on this identification can be found in  \cite{DeSe19_libro} or \cite{QuQu13}.

There are many equivalent definitions of Fourier type and cotype (see \cite{GCKaKoTo98}). Let us give the ones that are more akin to our framework. Given $1 \leq p \leq 2$, we say that $X$
has \emph{Fourier type} $p$ if there is a constant $C > 0$
such that for each choice of finitely many vectors $x_1, \ldots, x_N \in X$ we have
\[
\Big( \int_{\mathbb{T}} \Big\| \sum_{k=1}^N  x_k z^{k}  \Big\|^{p'} dz \Big)^{\frac{1}{p'}}
\leq C \Big(\sum_{k=1}^N \big\| x_k \big\|^p \Big)^{\frac{1}{p}}   \,.
\]
For   $2 \leq q < \infty$, $X$ has
\emph{Fourier cotype} $q$ if there is a constant $C > 0$ such that for each choice of finitely many vectors $x_1, \ldots, x_N \in X$ we have
\[
\Big(\sum_{k=1}^N \big\| x_k \big\|^q \Big)^{\frac{1}{q}}  \leq C \Big( \int_{\mathbb{T}} \Big\| \sum_{k=1}^N  x_k z^{k}  \Big\|^{q'} dz \Big)^{\frac{1}{q'}} \,.
\]

We refer to the comments after Proposition~\ref{proposition3} regarding the equivalence of these two concepts and also their connection with \eqref{eq-analytic-cotype} and \eqref{eq-analytic-type} below.
It was shown in  \cite[Proposition~2.4]{CaDeSe16} that a Banach space $X$
has Fourier cotype $q\ge 2$ if and only if there exists $C >0$ such that for every finite family $(x_{\alpha})_{\alpha  \in \mathbb{N}_{0}^{(\mathbb{N})}}$ we have
\begin{equation}\label{eq-analytic-cotype}
 \Big( \sum_{ \alpha  } \Vert x_{\alpha} \Vert^{q} \Big)^{\frac{1}{q}} \leq C \Big( \int_{\mathbb{T}^{n}} \Big\Vert  \sum_{ \alpha }  x_{\alpha} z^{\alpha} \Big\Vert^{q'} dz \Big)^{\frac{1}{q'}} \,.
\end{equation}
The proof of  \cite[Proposition~2.4]{CaDeSe16}) also works to show that  $X$
has Fourier  type $1 \leq p \leq 2$ if and only if  there exists $C >0$ such that for every finite family $(x_{\alpha})_{\alpha  \in \mathbb{N}_{0}^{(\mathbb{N})}}$ in $X$ we have
\begin{equation}\label{eq-analytic-type}
\Big( \int_{\mathbb{T}^{n}} \Big\Vert  \sum_{ \alpha }  x_{\alpha} z^{\alpha} \Big\Vert^{p'} dz \Big)^{\frac{1}{p'}}
\leq C \Big( \sum_{ \alpha  } \Vert x_{\alpha} \Vert^{p} \Big)^{\frac{1}{p}}  \,.
\end{equation}

A straightforward argument using the Bohr transform (see \eqref{armani}) allows to reformulate \eqref{eq-analytic-cotype} and \eqref{eq-analytic-type} in terms of Dirichlet series as
 \begin{equation} \label{ah}
\left(\sum_{n=1}^{N} \|a_n\|_X^{q}\right)^{1/{q}} \leq C 
\Big\Vert \sum_{n=1}^{N} a_{n} n^{-s} \Big\Vert_{\mathcal{H}_{q'}(X)}
\end{equation}
and
\begin{equation} \label{eh}
\Big\Vert \sum_{n=1}^{N} a_{n} n^{-s} \Big\Vert_{\mathcal{H}_{p'}(X)}
\leq C \left(\sum_{n=1}^{N} \|a_n\|_X^p\right)^{\frac{1}{p}},
\end{equation}
respectively, for every $X$-valued Dirichlet polynomial $\sum_{n=1}^{N} a_{n} n^{-s}$. 
Note that \eqref{eh} and the density of the finite sequences in $\ell_{p}(X)$ (the space of $p$-summing sequences in $X$) show that the operator
$\ell_{p}(X) \to \mathcal{H}_{p'}(X)$ given by $(a_{n}) \rightsquigarrow \sum a_{n} n^{-s}$ is continuous. Analogously, by \eqref{ah} and the density of the Dirichlet polynomials in
$\mathcal{H}_{q'}(X)$ (see \cite[24.2.1e]{DeSe19_libro}), the operator $\mathcal{H}_{q'}(X) \to \ell_{q}(X)$ given by $\sum a_{n} n^{-s} \rightsquigarrow (a_{n})$ is also continuous. This gives the equivalence between the first and third
statements in each of the following two results. The equivalence between the second and third statements is a straightforward consequence of the definition of the Hardy spaces of Dirichlet series.

\begin{proposition}\label{proposition3-cotipo}
Let $X$ be a Banach space. For  $2\leq q < \infty$ and $C \geq 1$, the following statements are equivalent:
 \begin{enumerate}[label=(\alph*)]
  \item \label{proposition3-cotipo a} $X$ has Fourier cotype $q$ with constant $C$;
  \item \label{proposition3-cotipo b} every Dirichlet series $D=\sum a_{n} n^{-s} \in \mathcal{H}_{q'}(X)$ satisfies
 \[
\Big(\sum_{n=1}^\infty \|a_n\|_X^{q}\Big)^{1/{q}} \leq C \|D\|_{\mathcal{H}_{q'}(X)};
\]

\item \label{proposition3-cotipo c} every $f \in H_{q'} (\mathbb{T}^{\infty},X)$ satisfies
\[
\Big(\sum_{\alpha \in  \mathbb{N}_{0}^{(\mathbb{N})}} \|\hat{f}(\alpha)\|_X^{q}\Big)^{1/{q}} \leq C \|f\|_{H_{q'} (\mathbb{T}^{\infty},X)}.
\]
\end{enumerate}
\end{proposition}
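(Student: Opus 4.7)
The plan is to establish the cycle (a) $\Rightarrow$ (c) $\Rightarrow$ (b) $\Rightarrow$ (a). The equivalence (b) $\Leftrightarrow$ (c) will be an immediate translation under the Bohr transform, while (a) $\Leftrightarrow$ (c) reduces via the analytic polynomial reformulation of Fourier cotype recorded in \eqref{eq-analytic-cotype} and cited from \cite[Proposition~2.4]{CaDeSe16}.

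For (b) $\Leftrightarrow$ (c) I simply invoke the definition \eqref{armani}: the Bohr transform $\mathfrak{B}_X$ is an isometric isomorphism $H_{q'}(\mathbb{T}^{\infty}, X) \to \mathcal{H}_{q'}(X)$ sending $f$ with Fourier coefficient $\hat{f}(\alpha)$ to the Dirichlet series $\sum a_n n^{-s}$ with $a_n = \hat{f}(\alpha)$ whenever $n = p_1^{\alpha_1}\cdots p_k^{\alpha_k}$. Because the prime factorization map $\alpha \mapsto n$ is a bijection between $\mathbb{N}_{0}^{(\mathbb{N})}$ and $\mathbb{N}$, the two $\ell_q(X)$-sums of coefficients coincide, and the two Hardy norms coincide by isometry. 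Hence the inequalities in (b) and (c) are literally the same, with the same constant $C$.

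For (a) $\Rightarrow$ (c): starting from Fourier cotype $q$ with constant $C$, \cite[Proposition~2.4]{CaDeSe16} yields the analytic polynomial inequality \eqref{eq-analytic-cotype} on $\mathbb{T}^{n}$ for every $n$ and every finite family $(x_{\alpha})$. For a general $f \in H_{q'}(\mathbb{T}^{\infty}, X)$, I approximate $f$ in the $H_{q'}$-norm by $X$-valued analytic trigonometric polynomials depending on finitely many variables, a density statement equivalent via $\mathfrak{B}_X$ to the density of Dirichlet polynomials in $\mathcal{H}_{q'}(X)$ cited as \cite[24.2.1e]{DeSe19_libro}. Applying \eqref{eq-analytic-cotype} to each approximant and passing to the limit gives (c): the right-hand side converges to $C\|f\|_{H_{q'}}$, and the left-hand side dominates any finite partial sum $\big(\sum_{|\alpha|\le M}\|\hat{f}(\alpha)\|^q\big)^{1/q}$, which yields the full sum by letting $M \to \infty$.

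Conversely, for (c) $\Rightarrow$ (a) I specialize (c) to $f(z) = \sum_{k=1}^N x_k z_1^k$, which lies in $H_{q'}(\mathbb{T}^{\infty}, X)$ and has only the nonzero coefficients $\hat{f}(k e_1) = x_k$ for $k = 1,\ldots,N$. Fubini reduces $\|f\|_{H_{q'}(\mathbb{T}^{\infty}, X)}$ to $\big(\int_{\mathbb{T}}\|\sum_k x_k z^k\|^{q'}\,dz\big)^{1/q'}$, so (c) becomes precisely the defining inequality of Fourier cotype $q$ with the same constant $C$. I do not expect any genuine obstacle: the proposition is essentially a three-way dictionary among formulations, and the only point requiring care is tracking that the constant $C$ is preserved through the density step in (a) $\Rightarrow$ (c).
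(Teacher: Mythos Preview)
Your proof is correct and follows essentially the same route as the paper: the equivalence (b)~$\Leftrightarrow$~(c) via the Bohr isometry, and (a)~$\Leftrightarrow$~(c) by combining \cite[Proposition~2.4]{CaDeSe16} (i.e., \eqref{eq-analytic-cotype}) with the density of analytic polynomials/Dirichlet polynomials \cite[24.2.1e]{DeSe19_libro}, tracking that the constant $C$ is preserved throughout. The paper phrases the density step on the Dirichlet side rather than on $\mathbb{T}^{\infty}$, but this is the same argument transported by $\mathfrak{B}_X$.
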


\begin{proposition}\label{proposition3}
 Let $X$ be a Banach space. For  $1\leq p \leq 2$ and $C \geq 1$, the following statements are equivalent:
 \begin{enumerate}[label=(\alph*)]
  \item $X$ has Fourier type $p$  with constant $C$;
  \item for every $(a_{n})_{n} \in \ell_{p} (X)$ the Dirichlet series $D=\sum a_{n} n^{-s}$ converges in $\mathcal{H}_{p'}(X)$ and
 \[
\|D\|_{\mathcal{H}_{p'}(X)} \leq C \Big(\sum_{n=1}^\infty \|a_n\|_X^p\Big)^{\frac{1}{p}};
\]

\item \label{proposition 3 c} for every $(x_{\alpha})_{\alpha \in \mathbb{N}_{0}^{(\mathbb{N})}} \in \ell_{p} (X)$ there is a function $f \in H_{p'}(\mathbb{T}^{\infty}, X)$ and so that $\hat{f}(\alpha) = x_{\alpha}$ for every $\alpha$ and
\[
\|f\|_{H_{p'}(\mathbb{T}^{\infty}, X)} \leq C \Big(\sum_{\alpha \in  \mathbb{N}_{0}^{(\mathbb{N})}} \|\hat{f}(\alpha) \|_X^p\Big)^{\frac{1}{p}}.
\]
 \end{enumerate}
\end{proposition}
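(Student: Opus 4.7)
The plan is to exploit the observation that, modulo natural identifications, the three conditions amount to the same statement. The equivalence (b)$\Leftrightarrow$(c) is essentially formal: by \eqref{armani}, the Bohr transform $\mathfrak{B}_X$ is an isometric isomorphism between $H_{p'}(\mathbb{T}^\infty,X)$ and $\mathcal{H}_{p'}(X)$, and the prime factorization $n=p_1^{\alpha_1}\cdots p_k^{\alpha_k}$ is a bijection between $\mathbb{N}_0^{(\mathbb{N})}$ and $\mathbb{N}$ under which the two coefficient sequences correspond term by term. Hence their $\ell_p(X)$-norms coincide and the two inequalities are equivalent word for word.

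For the core equivalence (a)$\Leftrightarrow$(c) I would rely on the analytic-type reformulation \eqref{eq-analytic-type}, whose equivalence with (a) is the type counterpart of \cite[Proposition~2.4]{CaDeSe16}. Granting that, the implication (a)$\Rightarrow$(c) becomes an extension-from-polynomials argument: given $(x_\alpha)_\alpha\in\ell_p(X)$, enumerate the index set by finite sets $A_N\uparrow \mathbb{N}_0^{(\mathbb{N})}$ and set $f_N=\sum_{\alpha\in A_N} x_\alpha z^\alpha$. Applying \eqref{eq-analytic-type} to $f_N-f_M$ shows that $(f_N)$ is Cauchy in $H_{p'}(\mathbb{T}^\infty,X)$; by completeness it converges to some $f$ whose Fourier coefficients are exactly the $x_\alpha$, thanks to the continuity of the evaluation maps $g\mapsto\hat{g}(\alpha)$ from $H_{p'}(\mathbb{T}^\infty,X)$ into $X$. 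Letting $N\to\infty$ in the inequality for $f_N$ yields the bound in (c).

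For the reverse implication (c)$\Rightarrow$(a) I would specialize (c) to sequences $(x_\alpha)$ supported on multi-indices of the form $\alpha=(k,0,0,\ldots)$ with $k=1,\ldots,N$. The associated function depends only on the first variable and equals $\sum_{k=1}^{N}x_k z_1^k$; by Fubini its $H_{p'}(\mathbb{T}^\infty,X)$ norm coincides with the $L_{p'}(\mathbb{T},X)$ norm appearing in the definition of Fourier type, and the inequality in (c) becomes precisely the defining inequality of Fourier type $p$ on~$\mathbb{T}$.

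The main obstacle is the black-boxed step (a)$\Leftrightarrow$\eqref{eq-analytic-type}: the reverse direction is immediate by restriction to a single coordinate, but the forward direction requires an induction on the number of torus variables, combining Fubini, Minkowski's integral inequality, and univariate Fourier type applied one coordinate at a time, exactly as in the cotype version of \cite[Proposition~2.4]{CaDeSe16}. Once this equivalence is granted, what remains in the proof of Proposition~\ref{proposition3} is just a completion-from-polynomials argument together with the trivial rewriting afforded by the Bohr transform.
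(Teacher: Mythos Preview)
Your proposal is correct and follows essentially the same route as the paper: both rely on the black box (a)$\Leftrightarrow$\eqref{eq-analytic-type} (the type analogue of \cite[Proposition~2.4]{CaDeSe16}), then pass to Dirichlet series/functions on $\mathbb{T}^\infty$ via the Bohr transform and a density-from-polynomials argument; the only cosmetic difference is that the paper routes the equivalence through (b) while you route it through (c). The paper also sketches an alternative proof via the general theory of Fourier type on locally compact abelian groups (using duality between $\mathbb{T}^\infty$ and $\mathbb{Z}^{(\mathbb{N})}$ and results from \cite{GCKaKoTo98}), which you do not mention, but this is offered as a second argument rather than the main one.
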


As a matter of fact, Fourier type and cotype can be seen as particular cases in the more general theory of Fourier type with respect to groups (see \cite{GCKaKoTo98}, whose
notation we follow now, for an excellent survey on this and related subjects). Within this setting Fourier type $p$   (as we have defined it) is Fourier type $p$  with respect to  $\mathbb{Z}$, and
our Fourier cotype $q$ is Fourier type $q'$ with respect to $\mathbb{T}$. Then \cite[Theorem~6.6]{GCKaKoTo98} implies that $X$ has Fourier type $p$ if and only if it has Fourier cotype $p'$, and hence both concepts are equivalent. However, we have preferred to deal with them separately because we later work with other notions of type and cotype (which are not equivalent to each other) and in this way the relationship between these and the new ones becomes more apparent.

On the other hand, this abstract point of view allows a proof of Propositions~\ref{proposition3-cotipo} and~\ref{proposition3} based on known results on Fourier type on groups. We only sketch here the arguments.
Regarding Proposition~\ref{proposition3-cotipo}, simply note that the statement~\ref{proposition3-cotipo c} is Fourier type $q'$ with respect to  $\mathbb{T}^{\infty}$. Then the equivalence between~\ref{proposition3-cotipo a} and~\ref{proposition3-cotipo c} follows from  \cite[Theorem~6.14]{GCKaKoTo98}.

The argument for Proposition~\ref{proposition3} is slightly longer. First of all $X$ has Fourier type $p$ if and only if $X^{*}$ has Fourier type $p$ with respect to $\mathbb{T}$ \cite[Theorem~6.3]{GCKaKoTo98}, and this happens if and only if $X^{*}$  has Fourier type $p$ with respect to $\mathbb{T}^{\infty}$ by \cite[Theorem~6.14]{GCKaKoTo98}. Again by \cite[Theorem~6.3]{GCKaKoTo98}, this is equivalent to $X$ having type $p$ with respect to the dual group of $\mathbb{T}^{\infty}$, which is $\mathbb{Z}^{(\mathbb{N})}$, and this is Proposition~\ref{proposition3}--\ref{proposition 3 c}.

Propositions \ref{proposition3-cotipo} and \ref{proposition3} provide Hausdorff-Young inequalities for vector valued Dirichlet series which are analogous to the original inequalities. However as mentioned in the introduction, Fourier type (or cotype) is a very restrictive property on the geometry of a Banach space. We work with the much weaker notions of type and cotype.

A Banach space $X$ is said to have \textit{cotype} $2 \leq q < \infty$ if there is a constant $C\geq 1$ such that for every $N \in \mathbb{N}$ and every $x_{1}, \ldots , x_{N} \in X$ we have
\begin{equation} \label{defcotipo}
\Big( \sum_{n=1}^{N} \Vert x_{n} \Vert^{q} \Big)^{\frac{1}{q}}
\leq C \bigg( \int_{\mathbb{T}^{N}} \Big\Vert \sum_{n=1}^{N} x_{n} z_{n} \Big\Vert^{q} dz \bigg)^{\frac{1}{q}} \,,
\end{equation}
and \textit{type} $1 \leq p \leq 2$ if there is a constant $C\geq 1$ such that for every $N \in \mathbb{N}$ and every $x_{1}, \ldots , x_{N} \in X$ we have
\begin{equation} \label{deftipo}
\bigg( \int_{\mathbb{T}^{N}} \Big\Vert \sum_{n=1}^{N} x_{n} z_{n} \Big\Vert^{p} dz \bigg)^{\frac{1}{p}} \leq C \Big( \sum_{n=1}^{N} \Vert x_{n} \Vert^{p} \Big)^{\frac{1}{p}} \,.
\end{equation}
We denote the best constants in these inequalities by $C_{q}(X)$ and $T_{p}(X)$ respectively.
Let us note that (see e.g. \cite[Theorem~6.8]{DeSe19_libro}) the $\Vert \cdot \Vert_{L_{r}}$-norms appearing at
\eqref{defcotipo} and \eqref{deftipo}
can be replaced by any other $\Vert \cdot \Vert_{L_{s}}$-norm at the only expense of modifying the constant.

Usually, type and cotype are defined in terms of Rademacher functions. It is well known that the definitions given above are equivalent to their Rademacher versions. Actually, this equivalence can be seen as a particular case of Lemma~\ref{lemma1}, since linear combinations of Rademacher functions are just 1-homogeneous Walsh polynomials.

 For spaces with finite cotype, translating \cite[Theorem~1.1]{CaDeSe14} to our setting provides a lower estimate of the norm of a Dirichlet series in terms of its coefficients. More precisely, for a Banach space $X$ with cotype $q$, $\sigma>1/q'$ and $1\leq p \leq \infty$ there is a constant $C\geq1$ such that
\begin{align}\label{fractalosa}
\sum_{n=1}^\infty \frac{\|a_n\|_X}{n^\sigma} \leq C \|D\|_{\mathcal{H}_{p}(X)},
\end{align}
for every $D\in \mathcal{H}_{p}(X)$.
In Corollary \ref{existente}, we prove that for every $\delta>0$ there is a constant $C\geq1$ such that
\begin{align*}
\Big(\sum_{n=1}^\infty \frac{\|a_n\|^q_X}{n^\delta}\Big)^{1/q} \leq C \|D\|_{\mathcal{H}_{p}(X)},
\end{align*}
for every $D\in \mathcal{H}_{p}(X)$.
Notice that this inequality is stronger than \eqref{fractalosa} since taking $\delta=\sigma-1/q'$ and applying H\"older's inequality to the left-hand side of \eqref{fractalosa} we get
\[\sum_{n=1}^\infty \frac{\|a_n\|_X}{n^\sigma} =
\sum_{n=1}^\infty \frac{\|a_n\|_X}{n^{\delta/q}}\frac{1}{n^{(\delta+1)/q'}}\leq \Big(\sum_{n=1}^\infty \frac{\|a_n\|^q_X}{n^\delta}\Big)^{1/q}
\Big(\sum_{n=1}^\infty \frac{1}{n^{\delta+1}}\Big)^{1/q'}
\leq C\Big(\sum_{n=1}^\infty \frac{\|a_n\|^q_X}{n^\delta}\Big)^{1/q}.\]

\medskip
Following \cite[Section~5.4]{Pi16} (see also \cite[Chapter~13]{DiJaTo95}), we consider $\{-1,1\}^{\infty}$ with the probability measure given by the infinite product of the uniform probability $(\delta_{1} + \delta_{-1})/2$. For $\varepsilon = (\varepsilon_{n})_{n} \in \{-1,1\}^{\infty}$ and $A \subset \mathbb{N}$ finite we denote
\[
\varepsilon_{A} = \prod_{n \in A} \varepsilon_{n} .
\]
A finite sum $\sum_{A}  x_{A} \varepsilon_{A}$ will be called a \textit{Walsh polynomial}.
Due to the probabilistic nature of the measure space, when dealing with $L_{p}(\{-1,1\}^{\infty},X)$, we will write $\mathbb{E}$ (expected value) rather than integrals. For $f \in L_{1}(\{-1,1\}^{\infty},X)$, the corresponding Walsh-Fourier coefficients are defined by
\[
\hat{f} (A) = \E[ f (\varepsilon) \varepsilon_{A} ] \,.
\]
With this at hand we may introduce another notion of type/cotype. A Banach space $X$ has Walsh type $p$ if there is a constant $C>0$ such that for every $n$ and every family $\{ x_{A} \colon A \subseteq [n] \} \subset X$ (here and all through the text we denote $[n]= \{1, \ldots, n \}$ for each $n\in \N$) we have
\[
\Big(\E\Big\Vert \sum_{A}  x_{A} \varepsilon_{A} \Big\Vert^{p'} \Big)^{\frac{1}{p'}}
\leq C \Big( \sum_{A} \Vert x_{A} \Vert^{p} \Big)^{\frac{1}{p}} \,,
\]
and has Walsh cotype $q$ if
here is a constant $C>0$ such that for every $n$ and every family $\{ x_{A} \colon A \subseteq [n] \} \subset X$ we have
\[
\Big( \sum_{A} \Vert x_{A} \Vert^{q} \Big)^{\frac{1}{q}}
\leq C
\Big(\E\Big\Vert \sum_{A}  x_{A} \varepsilon_{A} \Big\Vert^{q'} \Big)^{\frac{1}{q'}}  \,.
\]

Standard density arguments allow us to reformulate these concepts as inequalities analogous to Proposition~\ref{proposition3}\ref{proposition 3 c} and Proposition~\ref{proposition3-cotipo}\ref{proposition3-cotipo c}. Indeed,
 $X$ has Walsh type $p$ if and only if there is $C \geq 1$ so that
\begin{equation} \label{gaita}
	 	\|f \|_{L_{p'}(\{-1,1\}^\infty,X)} \leq C \Bigg(\sum_{\substack{A \subset \mathbb{N} \\ A \text{ finite}}} \|\hat f(A)\|^p\Bigg)^{\frac{1}{p}}.
\end{equation}
Analogously, for $X$ with Walsh cotype $q$, we have
\begin{equation} \label{tamboril}
	\Bigg(\sum_{\substack{A \subset \mathbb{N} \\ A \text{ finite}}}  \|\widehat {f}(A)\|^q\Bigg)^{\frac{1}{q}}
\leq C \|f\|_{L_{q'}(\{-1,1\}^\infty,X)}.
\end{equation}

Once again, these notions of type/cotype sit in a more general framework, namely that of type/cotype with respect to an orthonormal system (we refer again to \cite{GCKaKoTo98}). The concepts of Walsh type $p$ and Walsh cotype $p'$ coincide (see \cite[Theorem~7.14]{GCKaKoTo98}). To our best knowledge it is not known whether or not these are the same as Fourier type and cotype.

\bigskip

We end this section addressing the notion of $K$-convexity which is closely related to the concepts of type/cotype. A Banach space $X$ is said to be $K$-convex if the Rademacher projection is bounded. More precisely, the mapping defined on the finite sums in $L_{2} (\{-1,1\}^{\infty}, X)$ by
$$P_{1} \Big( \sum_{A } x_{A} \varepsilon_{A} \Big) = \sum_{\vert A \vert =1} x_{A} \varepsilon_{A}$$ extends to bounded linear operator
$P_{1} : L_{2} (\{-1,1\}^{\infty}, X) \to L_{2} (\{-1,1\}^{\infty}, X)$.

If $X$ is $K$-convex, we can also define for each $m$ the projection $P_{m} : L_{2} (\{-1,1\}^{\infty}, X) \to L_{2} (\{-1,1\}^{\infty}, X)$, which on finite sums is given by
$P_{m} \Big( \sum_{A } x_{A} \varepsilon_{A} \Big) = \sum_{\vert A \vert =m} x_{A} \varepsilon_{A}$. By \cite[Theorem~2.1]{Pi82} or \cite[Theorem~13.16]{DiJaTo95}, there exists $K>1$ such that
\begin{equation} \label{estar}
\Vert P_{m} \Vert \leq K^{m}
\end{equation}
for every $m$. Also, a Banach space is $K$-convex if and only if it has nontrivial type (see e.g. \cite[Theorem~13.3]{DiJaTo95}).

\section{Type, cotype and Hausdorff-Young inequalities} \label{sec-tyc_pol}

In this section we present a polynomial reformulation of type/cotype and use it to prove Hausdorff-Young inequalities for Dirichlet series. We also provide a slightly stronger result for spaces enjoying uniform $\C$-convexity.
\subsection{A polynomial reformulation of type and cotype}

In \cite{CaDeSe16} the notion of \textit{hypercontractive homogeneous cotype} was introduced, as  an extension of the `usual' (or, to be more accurate, Rademacher) cotype. For a multi-index $\alpha = (\alpha_{1}, \ldots , \alpha_{n} , 0 , 0 , \ldots) \in \mathbb{N}_{0}^{(\mathbb{N})}$ we write $\vert \alpha \vert = \alpha_{1} + \cdots + \alpha_{n}$.
With this notation, a Banach space $X$ has \textit{hypercontractive homogeneous cotype} $q$ if there exists $C >0$ such that for every $m\in \mathbb N$ and every finite family  $(x_{\alpha} )_{\vert \alpha \vert =m}$ we have
\begin{equation}\label{eq0}
	\Big( \sum_{\vert \alpha \vert = m} \Vert x_{\alpha} \Vert^{q} \Big)^{\frac{1}{q}}
	\leq C^{m} \Big( \int_{\mathbb{T}^{n}} \Big\Vert  \sum_{\vert \alpha \vert = m}  x_{\alpha} z^{\alpha} \Big\Vert^{2} dz \Big)^{\frac{1}{2}} \,.
\end{equation}
Then,  different conditions were presented in \cite{CaDeSe16} which ensure that a Banach space $X$ enjoys this property. As a consequence of Theorem~\ref{cotimplicapolcot} (and Remark~\ref{expo}) below we see that, actually, every Banach space with  Rademacher cotype has hypercontractive homogeneous cotype.

\begin{theorem}
	\label{cotimplicapolcot}
	For a Banach space $X$ and $2\leq q<\infty$ the following statements are equivalent:
	\begin{enumerate}[label=(\alph*)]
		\item 	\label{proposition5-a}	$X$ has  cotype $q$;
		\item	\label{proposition5-b} there exists $C>0$ such that for every $m\in \mathbb N$ and every finite family  $(x_{\alpha} )_{\vert \alpha \vert \le m}$
\begin{equation} \label{eq3}
	\Big( \sum_{\vert \alpha \vert \le  m} \Vert x_{\alpha} \Vert^{q} \Big)^{\frac{1}{q}}
	\leq C^{m} \Big( \int_{\mathbb{T}^{n}} \Big\Vert  \sum_{\vert \alpha \vert \le m}  x_{\alpha} z^{\alpha} \Big\Vert^{q} dz \Big)^{\frac{1}{q}} \,;
\end{equation}
		\item 	\label{proposition5-c} there exists $C>0$ such that for every $m\in \mathbb N$ and every finite family $\{ x_{A} \colon  \vert A \vert \le m \} \subset X$ we have
		\begin{equation} \label{shankar}
		\Big( \sum_{A} \Vert x_{A} \Vert^{q} \Big)^{\frac{1}{q}} \leq C^{m} \Big( \E  \Big\Vert  \sum_{A}  x_{A} \varepsilon_{A} \Big\Vert^{q} \Big)^{\frac{1}{q}} \,.
		\end{equation}

	\end{enumerate}
\end{theorem}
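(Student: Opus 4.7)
The plan is to prove the chain (a) $\Rightarrow$ (c) $\Rightarrow$ (b) $\Rightarrow$ (a), with the last implication being trivial. Setting $m=1$ in \eqref{eq3} produces the Steinhaus cotype inequality, equivalent to (a) by the remark on interchangeability of $L_r$ norms in the definition \eqref{defcotipo}. Setting $m=1$ in \eqref{shankar} is literally (a) in its Rademacher form, so (c) $\Rightarrow$ (a) is immediate as well.

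For (a) $\Rightarrow$ (c), I exploit that Walsh polynomials are automatically tetrahedral since $\varepsilon_i^2=1$. Given $f=\sum_{|A|\le m}x_A\varepsilon_A$, I introduce $m$ independent copies $\varepsilon^{(1)},\ldots,\varepsilon^{(m)}$ of the Rademacher sequence and construct a polynomial $\widetilde f(\varepsilon^{(1)},\ldots,\varepsilon^{(m)})$ that is linear in each block: for each $A=\{i_1<\cdots<i_k\}$ with $k\le m$, the monomial $\varepsilon_A$ is replaced by $\varepsilon^{(1)}_{i_1}\cdots\varepsilon^{(k)}_{i_k}$ (or a symmetric average thereof). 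A decoupling inequality of de la Pe\~na--Montgomery-Smith type gives $\|\widetilde f\|_{L_q}\le K^m\|f\|_{L_q}$ with $K$ absolute. Now I apply iterated cotype: conditioning on blocks $2,\ldots,m$, the function $\widetilde f$ is a Rademacher sum in $\varepsilon^{(1)}$ with a constant term, and cotype $q$ extracts the $\ell_q$-sum of its linear coefficients at the cost of a factor $C_q(X)$, with an extra factor $\le 2$ to absorb the mean via Jensen's inequality. Iterating block by block and invoking Fubini, I extract all the $\|x_A\|$ with total cost $(K\,C_q(X))^m$, giving \eqref{shankar}.

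For (c) $\Rightarrow$ (b), the key step is the \emph{reduction of general torus polynomials to tetrahedral ones} emphasised in the Introduction. Given $f(z)=\sum_{|\alpha|\le m}x_\alpha z^\alpha$ on $\mathbb{T}^n$, I introduce $m$ copies $w_{i,1},\ldots,w_{i,m}\in\mathbb{T}$ of each variable $z_i$ and replace each monomial $z_i^{\alpha_i}$ by a normalised multilinear symmetric expression in these copies, for instance $\binom{m}{\alpha_i}^{-1}e_{\alpha_i}(w_{i,1},\ldots,w_{i,m})$ with $e_k$ the elementary symmetric polynomial. This yields a tetrahedral polynomial $\widetilde f$ on $\mathbb{T}^{nm}$ whose coefficients retain the $x_\alpha$ (up to combinatorial factors bounded by $C^m$) and with $\|\widetilde f\|_{L_q}\le C^m\|f\|_{L_q}$. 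Since $\widetilde f$ is tetrahedral on the torus, its $L_q$ norm is equivalent, up to a constant of the form $C^m$, to that of the corresponding Walsh polynomial with the same coefficients, to which (c) applies and yields \eqref{eq3}.

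The main obstacle is the handling of \emph{inhomogeneous} polynomials throughout. A tempting reduction would split $f=\sum_k f_k$ into its degree-$k$ homogeneous parts and apply a homogeneous estimate to each, but this requires the boundedness of the projections $f\mapsto f_k$ — precisely inequality \eqref{estar}, a consequence of $K$-convexity and not of cotype alone (for instance $L_1$ has cotype $2$ but is not $K$-convex). The decoupling and iterated cotype must therefore operate on the polynomial as a whole, peeling off the constant term at each step via Jensen's inequality so that only universal multiplicative factors per step accumulate; the tetrahedral lift in (c) $\Rightarrow$ (b) must likewise be engineered to preserve the coefficients $x_\alpha$ across every degree with comparable $L_q$ norms.
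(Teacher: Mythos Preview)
Your chain (a)$\Rightarrow$(c)$\Rightarrow$(b)$\Rightarrow$(a) matches the paper's, and your (a)$\Rightarrow$(c) via decoupling plus iterated cotype is a legitimate alternative to the paper's combinatorial induction (Lemma~\ref{cotawalsh}). Two points, however, need correction.

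\textbf{The step (c)$\Rightarrow$(b) has a genuine gap.} The tetrahedral lift you describe --- replacing each $z_i^{\alpha_i}$ by a normalised elementary symmetric polynomial in $m$ fresh copies $w_{i,1},\dots,w_{i,m}$ --- introduces $nm$ variables, and the bound one actually gets on $\|\widetilde f\|_{L_q}$ is of order $(m+1)^n\|f\|_{L_q}$ (one factor of at most $m+1$ per original coordinate, coming from the kernel $\sum_{k\le m} e_k(w)\binom{m}{k}^{-1}\zeta^{-k}$). The constant in~\eqref{eq3} must be of the form $C^m$ \emph{independent of $n$}, and your construction offers no mechanism for this; a full polarisation into a symmetric $m$-linear form runs into the same difficulty via the multinomial factors. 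The paper's route here is entirely different: it writes $P(\varepsilon z)$ for $\varepsilon\in\{-1,1\}^n$, observes that as a function of $\varepsilon$ this is a Walsh polynomial of degree $\le m$ with $z$-dependent coefficients, and uses a parity decomposition (Lemma~\ref{lemma4}) to set up an induction on $m$. Rotation invariance in $z$ is what keeps the constant free of $n$.

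\textbf{Your $K$-convexity worry is misplaced, and this obscures a simpler path.} Inequality~\eqref{estar} concerns projections on all of $L_2(\{-1,1\}^\infty,X)$; but here you only ever handle polynomials of degree at most $m$, and for those the homogeneous projections are bounded with no $K$-convexity hypothesis. On $\mathbb{T}^n$ the $k$-homogeneous projection is a contraction via $P_k(z)=\int_{\mathbb{T}}P(\omega z)\,\omega^{-k}\,d\omega$; on $\{-1,1\}^n$ Lemma~\ref{lemma3} gives $\|P_k\|_q\le B^m\|P\|_q$ with $B$ absolute. The paper uses both freely, and so can you: once (a)$\Rightarrow$(c) and (c)$\Rightarrow$(b) are established for \emph{homogeneous} polynomials, the inhomogeneous case follows by projecting and summing, at the cost of an extra factor $(m+1)C^m\le (2C)^m$. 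Your decoupling argument for (a)$\Rightarrow$(c) thus only needs to treat the homogeneous case, where de~la~Pe\~na--Montgomery-Smith applies cleanly; the contortions you propose to ``peel off the constant term at each step'' are unnecessary.
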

The proof of this theorem is rather technical, so we postpone it  to Section~\ref{sec1}.
Although it was not considered in \cite{CaDeSe16}, for our purposes we also need a \emph{hypercontractive homogeneous type}. This,  again, turns out to be equivalent to the usual concept of Rademacher type, as follows from the next theorem.

\begin{theorem}
	\label{typimplicapoltyp}
	For a Banach space $X$ and $1\leq p\le 2$ the following statements are equivalent:
	\begin{enumerate}[label=(\alph*)]
		\item 	\label{propositionx-a}	$X$ has  type $p$;
		\item	\label{propositionx-b} there exists $C>0$ such that for every $m\in \mathbb N$ and every finite family  $(x_{\alpha} )_{\vert \alpha \vert \le m}$
\begin{equation}
	\label{eqx}
\Big( \int_{\mathbb{T}^{n}} \Big\Vert  \sum_{\vert \alpha \vert \le m}  x_{\alpha} z^{\alpha} \Big\Vert^{p} dz \Big)^{\frac{1}{p}}	
	\leq C^{m}  \Big( \sum_{\vert \alpha \vert \le  m} \Vert x_{\alpha} \Vert^{p} \Big)^{\frac{1}{p}}
 \,;
\end{equation}
		\item 	\label{propositionx-c} there exists $C>0$ such that for every $m\in\N$ and every finite family $\{ x_{A} \colon  \vert A \vert \le m \} \subset X$ we have
		\begin{equation} \label{shankarx}  \Big( \E  \Big\Vert  \sum_{A}  x_{A} \varepsilon_{A} \Big\Vert^{p} \Big)^{\frac{1}{p}}
		 \leq C^{m} \Big( \sum_{A} \Vert x_{A} \Vert^{p} \Big)^{\frac{1}{p}}
 \,.
		\end{equation}
	\end{enumerate}
\end{theorem}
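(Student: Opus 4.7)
The plan is to mirror the structure of Theorem~\ref{cotimplicapolcot}, reducing the main work to the implication \ref{propositionx-a}$\Rightarrow$\ref{propositionx-c}, which I would then obtain by duality. The trivial direction \ref{propositionx-b}$\Rightarrow$\ref{propositionx-a} follows from \eqref{eqx} by restricting attention to families with $x_{\alpha}=0$ unless $|\alpha|=1$; the same specialisation in \eqref{shankarx} yields \ref{propositionx-c}$\Rightarrow$\ref{propositionx-a}. For \ref{propositionx-b}$\Leftrightarrow$\ref{propositionx-c} I would invoke the tetrahedralisation/polarisation technique used in the proof of Theorem~\ref{cotimplicapolcot}: a polynomial of degree at most $m$ on $\mathbb{T}^{n}$ can be replaced, up to a factor $C^{m}$, by a tetrahedral polynomial on a larger polytorus, and tetrahedral polynomials on $\mathbb{T}^{n}$ have norm comparable to that of the corresponding Walsh polynomials. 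No new ingredients beyond those for cotype are needed at this step.

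The core content is therefore \ref{propositionx-a}$\Rightarrow$\ref{propositionx-c}. For $p=1$ the bound \eqref{shankarx} is immediate from the triangle inequality with $C=1$, so suppose $1<p\le 2$. Type $p$ with $p>1$ forces $X$ to be $K$-convex, hence so is $X^{*}$, and a standard Maurey--Pisier argument then gives that $X^{*}$ has cotype $p'$. Applying Theorem~\ref{cotimplicapolcot} to $X^{*}$ produces $C'>0$ such that for every $m$ and every finite family $\{y_{A}\colon|A|\le m\}\subset X^{*}$,
\[
\Big(\sum_{A}\|y_{A}\|_{X^{*}}^{p'}\Big)^{1/p'} \le (C')^{m}\Big(\E\Big\|\sum_{A}y_{A}\varepsilon_{A}\Big\|^{p'}\Big)^{1/p'}.
\]

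To pass from this cotype estimate on $X^{*}$ to the desired type estimate on $X$, I would fix a Walsh polynomial $f=\sum_{|A|\le m}x_{A}\varepsilon_{A}$ and test it against an arbitrary $g\in L_{p'}(\{-1,1\}^{\infty},X^{*})$ with $\|g\|_{L_{p'}}\le 1$, using the pairing $\langle f,g\rangle=\sum_{A}\langle\hat{f}(A),\hat{g}(A)\rangle$. Since $f$ is supported on $\{|A|\le m\}$, one has $\langle f,g\rangle=\langle f,P_{\le m}g\rangle$, so by H\"older's inequality in the coefficient sum followed by the cotype estimate above applied to $P_{\le m}g$,
\[
|\langle f,g\rangle|\le \Big(\sum_{|A|\le m}\|x_{A}\|^{p}\Big)^{1/p}\Big(\sum_{|A|\le m}\|\hat{g}(A)\|^{p'}\Big)^{1/p'}\le (C')^{m}\|P_{\le m}g\|_{L_{p'}}\Big(\sum_{|A|\le m}\|x_{A}\|^{p}\Big)^{1/p}.
\]
By \eqref{estar} and the triangle inequality, $\|P_{\le m}\|\le\sum_{j=0}^{m}K^{j}$, a quantity of order $K^{m}$; taking the supremum over $g$ then yields \eqref{shankarx} with a constant of the form $(C'\tilde K)^{m}$.

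The main obstacle is this duality step: both the passage from type $p$ on $X$ to cotype $p'$ on $X^{*}$ (which forces $p>1$ to guarantee $K$-convexity) and the need to truncate an arbitrary $g\in L_{p'}(X^{*})$ to a Walsh polynomial of degree $\le m$ without destroying the exponential-in-$m$ rate have to be handled carefully. Both issues are absorbed by the projection $P_{\le m}$ together with \eqref{estar}, whose geometric growth in $m$ is precisely what matches the $C^{m}$ shape of \eqref{shankarx}.
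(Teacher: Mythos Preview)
Your argument is correct but takes a genuinely different route for the key implication \ref{propositionx-a}$\Rightarrow$\ref{propositionx-c} than the one the paper intends. The paper does not spell out a separate proof of Theorem~\ref{typimplicapoltyp}; it asserts that the argument is developed in Section~\ref{sec1} alongside that of Theorem~\ref{cotimplicapolcot}, meaning the reader is expected to reverse every inequality in Lemmas~\ref{cotawalsh}--\ref{lemma4} and, in particular, to prove a type analogue of Lemma~\ref{cotawalsh} by the same inductive combinatorial scheme based on~\eqref{idcomb}. Your duality approach bypasses that work entirely: once Theorem~\ref{cotimplicapolcot} is available for $X^{*}$, the Walsh inequality on $X$ follows by pairing with $L_{p'}(\{-1,1\}^\infty,X^{*})$ and controlling the degree-$\le m$ projection through $K$-convexity. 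This is shorter and reuses the cotype theorem as a black box, at the cost of importing \eqref{estar}; the paper's direct adaptation, by contrast, never leaves $X$ and does not need $K$-convexity at this step.

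Two small points. First, the implication ``$X$ has type $p\Rightarrow X^{*}$ has cotype $p'$\,'' is the elementary duality between type and cotype and holds for all $p$, not a Maurey--Pisier fact; you only need $K$-convexity (hence $p>1$) for the projection estimate, and there you also need \eqref{bonami} to transfer \eqref{estar} from $L_2$ to $L_{p'}$, a detail you skip. Second, your description of \ref{propositionx-c}$\Rightarrow$\ref{propositionx-b} as ``replacing a polynomial by a tetrahedral polynomial on a larger polytorus'' does not match what Section~\ref{sec1} actually does: the mechanism is the parity decomposition $P(\varepsilon z)=\sum_A \varepsilon_A P_A(z)$ of Lemma~\ref{lemma4} combined with induction on the degree. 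That machinery does adapt to type with the inequalities reversed, so your claim that ``no new ingredients are needed'' is correct, but the sketch you give is not the right picture of the argument.
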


\begin{remark}\label{expo}
	We want to stress the fact that the $L_{2}$-norm in \eqref{eq0}, the $L_q$-norm in \eqref{eq3} and the $L_{p}$-norm in \eqref{eqx} can be replaced by any other $L_r$ norm (in some cases with  different constants, but still of exponential growth in $m$). For homogeneous polynomials this is an immediate consequence of the polynomial Kahane inequality shown in \cite[Proposition~1.2]{CaDeSe16}: for $1\le s\le r<\infty$,
	if $\{ x_\alpha \colon \alpha \in \mathbb{N}_{0}^{(\mathbb{N})}, \,  |\alpha|=m \}\subset X$ is a family  with only finitely many nonzero elements, then
	\begin{equation} \label{proposition7}
	\Big\Vert  \sum  x_{\alpha} z^{\alpha} \Big\Vert_{L_{r}(\mathbb{T}^\infty,X)}
	\le \left(\frac{r}{s}\right)^\frac{m}{2}
	\Big\Vert  \sum  x_{\alpha} z^{\alpha} \Big\Vert_{L_{s}(\mathbb{T}^\infty,X)}
	\end{equation}
	These inequalities extend to general polynomials of degree $m$ proceeding as in \cite[Theorem~8.10]{DeSe19_libro}.
	
Once we know that \eqref{proposition7} holds for polynomials of degree $m$ (not necessarily homogeneous),  Lemma~\ref{lemma1} below gives an analogous inequality for Walsh polynomials: for any family $\{ x_{A} \colon A \subset \mathbb{N} , \, \vert A \vert \le m \} \subset X$ with only finitely many nonzero elements we have
	\begin{equation} \label{bonami}
	\Big( \E  \Big\Vert  \sum_{A}  x_{A} \varepsilon_{A} \Big\Vert^{r} \Big)^{\frac{1}{r}}
	\leq \left((1+\sqrt{2}) \sqrt{\frac{r}{s}}\right)^{m} \Big( \E  \Big\Vert  \sum_{A}  x_{A} \varepsilon_{A} \Big\Vert^{s} \Big)^{\frac{1}{s}} \,.
	\end{equation}
	This inequality should be compared to \cite[Corollary~5.5]{Pi16}, from which the homogeneous case of \eqref{bonami} constants can be deduced, with better constants. 
	As a consequence, the
	exponent~2 in the expectations in inequalities \eqref{shankar} and \eqref{shankarx} can be also replaced by any other exponent $r$.
\end{remark}

As we pointed out earlier, the proof of Theorem~\ref{cotimplicapolcot} is given in Section~\ref{sec1}, but let us sketch here the main ideas.
\paragraph{Structure of the proof.}
To begin with, let us recall that an $X$-valued polynomial of $n$ variables is a function $P: \mathbb{C}^{n} \to X$ given by a finite sum
\[
P(z) = \sum_{\alpha \in \Lambda \subseteq \mathbb{N}_{0}^{n}} x_{\alpha} z^{\alpha} \,,
\]
where $x_{\alpha} \in X$ for every $\alpha$. The degree of the polynomial is de maximum over $\Lambda$ of $\vert \alpha \vert = \alpha_{1} + \cdots + \alpha_{n}$. A polynomial is $m$-homogeneous if $\vert \alpha \vert = m$ for every $\alpha \in \Lambda$.
A quick thought shows that $X$ has cotype $q$ (see \eqref{defcotipo}) if and only if
\[
\Big( \sum_{\alpha} \Vert x_{\alpha} \Vert^{q} \Big)^{\frac{1}{q}} \leq C \Vert P \Vert_{L_{q}}
\]
for every polynomial of degree $1$ (note that the constant $C$ does not depend on the number of variables $n$). Also, \eqref{eq3} can be reformulated as
\begin{equation} \label{beira}
\Big( \sum_{\alpha} \Vert x_{\alpha} \Vert^{q} \Big)^{\frac{1}{q}} \leq C^{m} \Vert P \Vert_{L_{q}}
\end{equation}
for every polynomial of degree $m$ (here $C$ depends neither on $n$ nor on $m$). We begin the proof of Theorem~\ref{cotimplicapolcot} by showing that the inequality we aim at holds for a specific, easier to handle, class of polynomials: tetrahedral. These are polynomials where no power bigger than $1$ appears or, in other words, the monomials involved consist only of products of different variables. More precisely, a tetrahedral polynomial of $n$ variables is of the form
\[
\sum_{\alpha \in \{0 , 1\}^{n}} x_{\alpha} z^{\alpha} \,.
\]
Then, the first step towards the proof of Theorem~\ref{cotimplicapolcot} is to show in Lemma~\ref{cotawalsh} that \eqref{beira} (or, equivalently, \eqref{eq3}) holds for $m$-homogeneous tetrahedral polynomials. The second step is to show that the same inequality holds for every homogeneous polynomial. It is this second step (to pass from tetrahedral homogeneous to arbitrary homogeneous polynomials) that requires some work. In order to achieve this, we have to translate our results to the Walsh setting.

Note that, given $A\subseteq [n]$ we can define $\alpha = (\alpha_{i})_{i} \in \{0 , 1\}^{(\mathbb{N})}$ as $\alpha_{i} = 1$ if $i \in A$ and $0$ if $i \not\in A$. With this idea, to each finite set we can associate a tetrahedral multi-index  (and vice-versa). Denoting $z_{A} = \prod_{i \in A} z_{i}$ for each $A \subseteq [n]$ we may rewrite each tetrahedral polynomial as $\sum_{A\subseteq [n]} x_A z_A$. In this way, we have a straightforward identification between tetrahedral and Walsh polynomials:
\[
\sum_{\alpha \in \{0 , 1\}^n} x_{\alpha} z^{\alpha} = \sum_{A\subseteq [n]} x_A z_A \leftrightsquigarrow \sum_{A\subseteq [n]} x_A \varepsilon_A \,.
\]
In Lemma~\ref{lemma1} we relate the $L_{p}(\mathbb{T^{\infty}}, X)$-norm of the tetrahedral polynomial $\sum_{A\subseteq [n]} x_A z_A$ with the $L_{p}({\{-1,1\}^{\infty}}, X)$-norm of the Walsh polynomial $\sum_{A\subseteq [n]} x_A \varepsilon_A$ and in this way obtain that \eqref{shankar} holds for homogeneous Walsh polynomials. Then Lemma~\ref{lemma3} shows how to pass from homogeneous to arbitrary Walsh polynomials, completing the proof of \ref{proposition5-a} implies \ref{proposition5-c}. Finally, to deduce from \ref{proposition5-c} that \eqref{beira} holds for homogeneous polynomials goes through a convoluted description of a polynomial given in Lemma~\ref{lemma4}. To pass from homogeneous to arbitrary polynomials is rather stantard, and this finishes the proof.

\begin{remark}\label{loquevaleahora}
Following exactly the same arguments as in \cite[Theorem~5.3]{DeMaSc_12} (see also \cite[Proposition~25.29]{DeSe19_libro}) it can be shown that if $Y$ is a cotype $q$ space and $v : X \to Y$ is an $(r,1)$-summing operator, then there exists a constant $C>0$ so that
\[
\bigg( \sum_{\alpha} \Vert v(c_{\alpha}) \Vert_{Y}^{\frac{qrm}{q+(m-1)r}} \bigg)^{\frac{q+(m-1)r}{qrm}} \leq C^{m} \sup_{z \in \mathbb{D}^{n}} \Vert P(z) \Vert_{X}
\]
for every $X$-valued polynomial $P (z) = \sum_{\alpha} c_{\alpha} z^{\alpha}$ of $n$ variables of degree $m$. With this at hand, the estimates in  \cite[Theorem~1.6--(2) and Theorem~5.4--(2)]{DeMaSc_12} hold for Banach spaces with cotype $q$.

  Every $f \in H_{p} (\mathbb{T}^{\infty},X)$ defines a formal power series in infinitely many variables $\sum_{\alpha} \hat{f}(\alpha) z^{\alpha}$. The set of $z$s for which the power series of
every $f$ in $H_{p} (\mathbb{T}^{\infty},X)$ converges is called the set of monomial convergence:
\[
\mon H_{p} (\mathbb{T}^{\infty},X) = \Big\{ z \in \mathbb{C}^{\mathbb{N}} \colon
\sum_{\alpha} \Vert  \hat{f}(\alpha) z^{\alpha} \Vert_{X} < \infty \text{ for all } f \in H_{p} (\mathbb{T}^{\infty},X)
\Big\} \,.
\]
Then, the equivalence between cotype and polynomial cotype given Theorem~\ref{cotimplicapolcot} combined with \cite[(16)]{CaDeSe16} shows that, if we denote $\cot (X) = \inf \{q  \colon X \text{ has cotype } q \}$, then
\[
\ell_{\cot(X)'} \cap B_{c_{0}}  \subseteq \mon H_{p} (\mathbb{T}^{\infty},X) \subseteq \ell_{\cot(X)' + \varepsilon} \cap B_{c_{0}}
\]
for every $\varepsilon >0$. If $X$ attains its optimal cotype (that is, if $X$ has cotype $\cot (X)$), then \cite[Theorem~3.1]{CaDeSe16} gives
\begin{equation} \label{mon}
\mon H_{p} (\mathbb{T}^{\infty},X) = \ell_{\cot(X)'} \cap B_{c_{0}} \,.
\end{equation}
Also, let us recall that a sequence $b=(b_{n})_{n}$ is an $\ell_{1}$-multiplier of $\mathcal{H}_{p}(X)$ if $\sum_{n=1}^{\infty} \Vert a_{n} \Vert_{X} \vert b_{n} \vert < \infty$
for every $\sum a_{n} n^{-s}$ in $\mathcal{H}_{p}(X)$. As an immediate consequence of \eqref{mon} (see \cite[Theorem~4.3]{CaDeSe16}) we have that if $X$ has cotype $\cot(X)$, then a
multiplicative $b$ (that is, $b_{mn}=b_{n}b_{m}$ for every $m,n$) is an $\ell_{1}$-multiplier of $\mathcal{H}_{p}(X)$ if and only if $b \in \ell_{\cot(X)'}$.
\end{remark}

\subsection{Hausdorff-Young inequalities}

Let us note that Fourier cotype, as formulated in \eqref{eq-analytic-cotype}, implies \eqref{eq3} with universal constants, independent
of $m$.
Under the much weaker assumption of  cotype, the exponential dependence on $m$ (as $C^{m}$) in Theorem \ref{cotimplicapolcot} still allows us to carry estimations from the polynomial to the Dirichlet setting at a reasonable price. 
An analogous situation holds for Banach spaces with type~$p$. 
We obtain inequalities, not only for Dirichlet series, but also for functions defined on $\mathbb{T}^{\infty}$ or $\{-1,1\}^{\infty}$, as in Proposition~\ref{proposition3-cotipo} or \eqref{tamboril}. Comparing what we obtain with those inequalities, we gather that the $r$ factor in the following theorem is the price we pay for loosening the hypothesis of Fourier or Walsh to just Rademacher cotype. Let us recall that  the number of prime divisors of  $n \in \mathbb{N}$, counted with multiplicity is denoted by $\Omega (n)$.

\begin{theorem} \label{ponzio}
 For a Banach space $X$ and  $2\leq q <\infty$ the following statements are equivalent:
 \begin{enumerate}[label=(\alph*)]
 	\item \label{ponzio1} X has  cotype $q$;

 	\item \label{ponzio3} for some (every) $1\le p <\infty$, there exist constants $C\geq1$ and $0<r<1$ such that every vector-valued Dirichlet series $D = \sum a_{n} n^{-s} \in \mathcal{H}_p(X)$ satisfies
\[
\left(\sum_{n=1}^\infty r^{\Omega(n)}\|a_n\|^q\right)^{\frac{1}{q}} \leq C \|D\|_{\mathcal{H}_p(X)}.
\]

\item \label{ponzio2} for some (every) $1\le p <\infty$, there exist constants $C\geq1$ and $0<r<1$ such that every function $f\in H_p(\T^\infty,X)$ satisfies
 	\[
 	\Bigg(\sum_{\alpha\in \N_0^{(\N)}} r^{ |\alpha|} \|\widehat {f}(\alpha)\|^q\Bigg)^{\frac{1}{q}} \leq C \|f\|_{H_p(\T^\infty,X)};
 	\]
 	 \end{enumerate}
 In addition, the next statement~\ref{ponzio4} implies~\ref{ponzio1},\ref{ponzio2} and~\ref{ponzio3} and is equivalent to them whenever $X$ is $K$-convex:
 \begin{enumerate}[label=(\alph*)] \setcounter{enumi}{3}
 	\item\label{ponzio4} for some (every) $1<p<\infty$, there exist constants $C\geq1$ and $0<r<1$ such that every function $f\in L_p(\{-1,1\}^\infty,X)$ satisfies
 	\[
 	\Bigg(\sum_{\substack{A \subset \mathbb{N} \\ A \text{ finite}}} r^{ \vert A \vert} \|\widehat {f}(A)\|^q\Bigg)^{\frac{1}{q}} \leq C \|f\|_{L_p(\{-1,1\}^\infty,X)}.
 	\]
 \end{enumerate}
\end{theorem}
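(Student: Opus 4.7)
The theorem splits into four implication blocks. The three equivalences (a)$\Leftrightarrow$(b)$\Leftrightarrow$(c) rest on the polynomial reformulation of cotype from Theorem~\ref{cotimplicapolcot} combined with a homogeneous decomposition on the polytorus, while the Walsh statement (d) is handled separately using $K$-convexity of $X$.

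\emph{From (a) to (c).} Fix $1\leq p<\infty$ and a polynomial $f\in H_p(\mathbb{T}^\infty,X)$. Decompose $f=\sum_{m\geq 0}f_m$ into homogeneous parts $f_m(z)=\sum_{|\alpha|=m}\hat f(\alpha)z^\alpha$. The key observation is that $f_m(z)=\int_{\mathbb{T}}f(uz)\,u^{-m}\,du$, so by Minkowski each projection $f\mapsto f_m$ is a contraction on $L_p(\mathbb{T}^\infty,X)$ for every $p\in[1,\infty)$. On the other hand, Theorem~\ref{cotimplicapolcot} together with the Kahane inequality \eqref{proposition7} from Remark~\ref{expo} produces a constant $A=A(p,q,X)$ with
\[
\Big(\sum_{|\alpha|=m}\|\hat f(\alpha)\|^q\Big)^{1/q}\leq A^m\|f_m\|_{L_p}\leq A^m\|f\|_{L_p}.
\]
Now pick any $0<r<A^{-q}$: summing the geometric series yields
\[
\sum_\alpha r^{|\alpha|}\|\hat f(\alpha)\|^q\leq\sum_{m\geq 0}(rA^q)^m\|f\|_{L_p}^q\leq (1-rA^q)^{-1}\|f\|_{L_p}^q,
\]
and density of polynomials in $H_p(\mathbb{T}^\infty,X)$ extends the bound to all of $H_p(\mathbb{T}^\infty,X)$.

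\emph{(c)$\Leftrightarrow$(b) and the converses.} Statement (c) transfers to (b) through the Bohr transform \eqref{armani}, since $n=p_1^{\alpha_1}\cdots p_k^{\alpha_k}$ gives $\Omega(n)=|\alpha|$. For (b)$\Rightarrow$(a), given $x_1,\dots,x_N\in X$, test (b) on the Dirichlet polynomial $D=\sum_{k=1}^Nx_k p_k^{-s}$ with distinct primes $p_1,\ldots,p_N$. Since each $\Omega(p_k)=1$, (b) collapses to $r^{1/q}(\sum_k\|x_k\|^q)^{1/q}\leq C\|\sum_k x_kz_k\|_{L_p(\mathbb{T}^N,X)}$, which is the cotype $q$ inequality for $X$ after passing from $L_p$ to $L_q$ via the classical Kahane inequality for Rademacher-type sums (degree one). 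The proof of (d)$\Rightarrow$(a) is entirely analogous, testing on $f(\varepsilon)=\sum_k x_k\varepsilon_k$ and using the same Kahane step.

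\emph{(a)+$K$-convexity $\Rightarrow$ (d).} One mimics the polytorus argument in the Boolean cube. Given a finite Walsh polynomial $f\in L_p(\{-1,1\}^\infty,X)$ write $f=\sum_m P_mf$. Under $K$-convexity, Pisier's theorem together with the Walsh-Kahane inequality \eqref{bonami} (and duality for $1<p<2$) upgrades the $L_2$ bound \eqref{estar} to $\|P_m\|_{L_p(X)\to L_p(X)}\leq K_p^m$ for every $1<p<\infty$. Theorem~\ref{cotimplicapolcot} combined with \eqref{bonami} then gives a constant $B$ such that $(\sum_{|A|=m}\|\hat f(A)\|^q)^{1/q}\leq B^m\|P_mf\|_{L_p}\leq (BK_p)^m\|f\|_{L_p}$; choosing $0<r<(BK_p)^{-q}$, summing the geometric series and using density of Walsh polynomials in $L_p$ finishes the argument. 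The main obstacle is precisely this last implication: tracking the exponential-in-$m$ constants produced by Theorem~\ref{cotimplicapolcot} and upgrading the $L_2$ bound on $P_m$ to an $L_p$ bound with only exponential loss, so that the weight $r$ can absorb all of them. Every other step reduces to choosing $r$ small enough for the geometric series to converge.
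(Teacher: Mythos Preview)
Your proof is correct and follows essentially the same route as the paper: the Bohr transform handles (b)$\Leftrightarrow$(c), testing on degree-one polynomials gives the converses to (a), Theorem~\ref{cotimplicapolcot} plus the contractive homogeneous projection and a geometric-series argument yields (a)$\Rightarrow$(c), and for (a)$\Rightarrow$(d) under $K$-convexity the paper likewise upgrades \eqref{estar} to an $L_p$ bound on $P_m$ via \eqref{bonami} when $p\ge 2$ and via duality (using that $X^*$ is $K$-convex) when $1<p<2$. The only cosmetic difference is that the paper absorbs the passage from the $L_q$-norm in Theorem~\ref{cotimplicapolcot} to the $L_p$-norm into Remark~\ref{expo} rather than citing \eqref{proposition7}/\eqref{bonami} each time.
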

\begin{proof}
Observe that~\ref{ponzio2} and~\ref{ponzio3} are equivalent via Bohr's transform. The fact that~\ref{ponzio2}$\Rightarrow$\ref{ponzio1} follows by noticing that, given $x_{1}, \ldots , x_{N} \in X$ and defining $P(z) = \sum_{n=1}^{N} x_{n} z_{n}$, the sum at the left-hand side becomes $r^{1/q} \big(\sum  \Vert x_{n} \Vert^{q} \big)^{1/q}$. The same argument proves that~\ref{ponzio4}$\Rightarrow$\ref{ponzio1} invoking Theorem~\ref{cotimplicapolcot}.

Next we see that~\ref{ponzio1}$\Rightarrow$\ref{ponzio2}. Set $f\in H_p(\T^\infty,X)$ and for every $m\in \N$ let $f_m$ be its $m$-homogeneous projection (see \cite[Proposition~2.5]{CaDeSe14}). By Theorem~\ref{cotimplicapolcot}, there is a constant $c\ge 1$ such that for every (finite) $m$-homogeneous polynomial $P=\sum x_\alpha z^\alpha$ we have
\[
\Big( \sum_{\vert \alpha \vert = m} \Vert x_{\alpha} \Vert^{q} \Big)^{\frac{1}{q}} \leq c^{m} \|P\|_p.
\]
Since polynomials are dense in $H_p(\T^\infty,X)$ and the $m$-homogeneous projection is a contraction, a straightforward density argument yields
\begin{equation} \label{pitymartinez}
\Big( \sum_{\vert \alpha \vert = m} \Vert \widehat{f}(\alpha) \Vert^{q} \Big)^{\frac{1}{q}} \leq c^{m} \|f_m\|_p\le c^{m} \|f\|_p.
\end{equation}
Taking $r<1/c^q$ we get
\begin{align*}
\Big(\sum_{\alpha\in \N_0^{(\N)}} r^{ |\alpha|} \|\widehat {f}(\alpha)\|^q\Big)^{\frac{1}{q}} =
\Big(\sum_{m=1}^{\infty} r^{m} \sum_{|\alpha|=m} \|\widehat {f}(\alpha)\|^q\Big)^{\frac{1}{q}}
\le \Big(\sum_{m=1}^{\infty} (rc^q)^{m} \Big)^{\frac{1}{q}}\|f\|_p
\le C \|f\|_p,
\end{align*}
which completes the argument.

We finally show that~\ref{ponzio1}$\Rightarrow$\ref{ponzio4} for $K$-convex spaces . First assume that $p=2$. In this case \eqref{estar} gives a constant $K$ so that
\begin{equation} \label{esta}
\Vert f_{m} \Vert_{2} \leq K^{m} \Vert f \Vert_{2}
\end{equation}
for  every $f \in L_2(\{-1,1\}^\infty,X)$. This enables us to proceed exactly as in \eqref{pitymartinez} to get the desired result. For the general case when $1<p<\infty$, it only remains to show that an  inequality analogous to \eqref{esta} holds. On the one hand, if $2\leq p <\infty$, using \eqref{bonami} we get
\[
\Vert f_{m} \Vert_{p} \leq C^m\Vert f_{m} \Vert_{2} \leq (CK)^m\Vert f \Vert_{2}\leq (CK)^m\Vert f \Vert_{p},
\]
for some constant $C>0$.

On the other hand,  it is a well-known fact that if $X$ is $K$-convex, so is $X^*$ (see for example \cite[Corollary~13.7 and Theorem~13.15]{DiJaTo95}). Therefore, for  $1<p\le 2$,
\begin{multline*}
\Vert f_{m} \Vert_{p}= \sup_{\substack{g\in L_{p'}(X^*)\\\|g\|_{p'}=1}} \E[g(\varepsilon)(f_m(\varepsilon))] =
\sup_{\substack{g\in L_{p'}(X^*)\\\|g\|_{p'}=1}} \E[g_m(\varepsilon)(f(\varepsilon))] \\
\le \sup_{\substack{g\in L_{p'}(X^*)\\\|g\|_{p'}=1}} \|g_m\|_{p'} \|f\|_p
\le \sup_{\substack{g\in L_{p'}(X^*)\\ \|g\|_{p'}=1}} \widetilde{K}^m \|g\|_{p'} \|f\|_p \le \widetilde{K}^m \|f\|_p,
\end{multline*}
for some constant $\widetilde{K}>0$.
\end{proof}

As a consequence of Theorem \ref{ponzio} we obtain the following result mentioned in the previous section.
\begin{corollary}\label{existente}
	Let $X$ be a Banach space with cotype $q$ and set $1\leq p \leq \infty$. For every $\delta>0$ there is a constant $C\geq1$ such that every $D = \sum a_{n} n^{-s} \in \mathcal{H}_p(X)$ satisfies
	\begin{align*}
	\Big(\sum_{n=1}^\infty \frac{\|a_n\|^q}{n^\delta}\Big)^{1/q} \leq C \|D\|_{\mathcal{H}_{p}(X)}.
	\end{align*}
\end{corollary}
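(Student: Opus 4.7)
Plan. The strategy is to derive the corollary directly from Theorem~\ref{ponzio}~\ref{ponzio3} by converting the Abel-type weight $r^{\Omega(n)}$ appearing there into the polynomial weight $1/n^\delta$. The bridge between the two families of weights is the elementary arithmetic inequality $\Omega(n)\leq \log_2 n$, valid for every $n\geq 1$: since $n$ is a product of $\Omega(n)$ primes counted with multiplicity and each prime is at least $2$, one has $n\geq 2^{\Omega(n)}$.

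Concretely, given $\delta>0$, I would set $r := 2^{-\delta}\in(0,1)$. Using $r<1$ and $\Omega(n)\leq \log_2 n$,
\[
\frac{1}{n^\delta}
\;=\; 2^{-\delta \log_2 n}
\;=\; r^{\log_2 n}
\;\leq\; r^{\Omega(n)},
\]
and summing against $\|a_n\|^q$ gives
\[
\sum_{n=1}^\infty \frac{\|a_n\|^q}{n^\delta}
\;\leq\; \sum_{n=1}^\infty r^{\Omega(n)} \|a_n\|^q.
\]
Theorem~\ref{ponzio}~\ref{ponzio3} applied with this specific $r$ bounds the right-hand side by $C^q \|D\|_{\mathcal{H}_p(X)}^q$ for some constant $C$ depending on $\delta$, $p$, $q$ and $X$, which is exactly the claimed inequality (after taking $q$-th roots).

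The only point that needs some care is justifying that Theorem~\ref{ponzio}~\ref{ponzio3} may be invoked with the prescribed value $r = 2^{-\delta}$, rather than simply with the single value of $r$ whose existence is asserted. Examining the proof of Theorem~\ref{ponzio}, the convergence of the geometric series $\sum_m (rc^q)^m$ shows that any $r$ in the admissible range $(0,1/c^q)$ works, where $c$ is the hypercontractive cotype constant from Theorem~\ref{cotimplicapolcot}; the associated constant $C_r$ blows up as $r$ approaches the upper endpoint, which accounts for the expected dependence of $C$ on $\delta$. The main obstacle is thus the bookkeeping required to verify that the admissible range of $r$ can be made to contain $2^{-\delta}$ for the given $\delta$, together with keeping track of how the resulting constant depends on $\delta$.
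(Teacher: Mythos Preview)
Your reduction via $\Omega(n) \leq \log_2 n$ is correct and gives $n^{-\delta} \leq r^{\Omega(n)}$ for $r = 2^{-\delta}$, so the argument is complete whenever this $r$ lies in the admissible range for Theorem~\ref{ponzio}~\ref{ponzio3}. However, the step you flag as mere bookkeeping is in fact where the argument breaks down for small~$\delta$. The proof of Theorem~\ref{ponzio} yields the inequality in~\ref{ponzio3} for any $r \in (0, 1/c^q)$, where $c \geq 1$ is the constant supplied by Theorem~\ref{cotimplicapolcot}. This threshold $1/c^q$ is a fixed number strictly below~$1$, determined once and for all by $X$, $p$, and $q$; nothing in your argument allows it to be enlarged. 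Hence for $\delta < q \log_2 c$ one has $2^{-\delta} > 1/c^q$, the chosen $r$ is inadmissible, and Theorem~\ref{ponzio} cannot be invoked with it. Since the inequality $\Omega(n) \leq \log_2 n$ is attained infinitely often (at powers of~$2$), no sharper pointwise comparison of this type can rescue the approach.

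The paper's proof confronts exactly this obstruction. It works with the fixed $r$ provided by Theorem~\ref{ponzio} and chooses $k$ so that $p_k^{-\delta} \leq r$; the finitely many small primes $p_1,\dots,p_{k}$ are then handled separately. For each tuple $(\alpha_1,\dots,\alpha_k)$ of exponents at these primes one forms the function $f_{\alpha_1,\dots,\alpha_k}$ obtained from $f = \mathfrak{B}_X^{-1}(D)$ by integrating out the first $k$ toral variables against $\omega_1^{-\alpha_1}\cdots\omega_k^{-\alpha_k}$; it satisfies $\|f_{\alpha_1,\dots,\alpha_k}\|_p \leq \|f\|_p$ and its nonzero Fourier coefficients involve only primes $p_j$ with $j>k$, for which the comparison $p_j^{-\delta} \leq r$ does hold. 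Applying Theorem~\ref{ponzio} to each such restriction and summing over the tuples introduces only the convergent factor $\prod_{j\le k}(1-p_j^{-\delta})^{-1}$, which is absorbed into~$C$. Your argument recovers the easy case $k=1$ (equivalently $2^{-\delta} \leq r$), but the small-$\delta$ regime requires this additional idea.
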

\begin{proof}
	From Theorem \ref{ponzio} we know that there exist constants $C\geq1$ and $0<r<1$ such that every $D = \sum a_{n} n^{-s} \in \mathcal{H}_p(X)$ satisfies
	\[
	\left(\sum_{n=1}^\infty r^{\Omega(n)}\|a_n\|^q\right)^{\frac{1}{q}} \leq C \|D\|_{\mathcal{H}_p(X)}.
	\]
	Fix $\delta>0$ and let $k\in \N$ be such that $1/p_k^\delta\leq r$ where $p_k$ denotes the $k$-th prime number. Notice that if $p_k=2$ we are done since we would get $1/n^\delta\leq r^{\Omega(n)}$. However if $p_k>2$ we must deal with the first $k$ primes where the estimation by $r$ fails. This procedure is analogous to 
\cite[Lemma~2]{DeGaMaPG08}
so we only sketch the proof. Fix $D = \sum a_{n} n^{-s} \in \mathcal{H}_p(X)$ and consider $f=\mathfrak{B}_X^{-1}(D)\in
	H_p(\T^\infty,X)$. For $\alpha_1,\ldots,\alpha_k\in \N_0$ define
	\[f_{\alpha_1,\ldots,\alpha_k}(z)
	=\int_{\T^k} f(\omega_1,\ldots,\omega_k,z_{k+1},z_{k+2},\ldots) \omega_1^{-\alpha_1}\ldots\omega_k^{-\alpha_k} d\omega.\]
	An easy computation shows that $f_{\alpha_1,\ldots,\alpha_k}\in
	H_p(\T^\infty,X)$ and $\|f_{\alpha_1,\ldots,\alpha_k}\|_p\le \|f\|_p$. Moreover, for $\beta\in \N_0^{(\N)}$ we have that
	\[\widehat{f}_{\alpha_1,\ldots,\alpha_k}(\beta)=\begin{cases}
	\widehat{f}(\alpha_1,\ldots,\alpha_k,\beta_{k+1},\beta_{k+2},\ldots) \quad &\text{if }\beta=(0,\ldots,0,\beta_{k+1},\beta_{k+2},\ldots)
	\\ 0 \quad &\text{otherwise.}
	\end{cases}\]
	Applying \ref{ponzio2} of Theorem \ref{ponzio} to $f_{\alpha_1,\ldots,\alpha_k}$ we get
	\[\Big(\sum_{\beta\in \N_0^{(\N)}} r^{ |\beta|} \|\widehat {f}(\alpha_1,\ldots,\alpha_k,\beta)\|^q\Big)^{\frac{1}{q}} \leq C \|f\|_p.\]
	Therefore we deduce
	\begin{multline*}
	\sum_{n=1}^\infty \frac{\|a_n\|^q}{n^\delta}
	\leq\sum_{\alpha_1,\ldots,\alpha_k\geq 0}p_1^{-\alpha_1\delta}\ldots p_k^{-\alpha_k\delta}
	\sum_{\beta\in \N_0^{(\N)}} r^{|\beta|}\|\widehat {f}(\alpha_1,\ldots,\alpha_k,\beta)\|^q
	\\
	\leq C^q  \sum_{\alpha_1,\ldots,\alpha_k\geq 0}p_1^{-\alpha_1\delta}\ldots p_k^{-\alpha_k\delta} \|f\|_p^q
	=C^q  \Big(\prod_{j=1}^k \frac{1}{1-1/p_j^\delta}\Big) \|f\|_p^q,
	\end{multline*}
	which completes the proof.
\end{proof}

We now turn our attention to spaces with nontrivial type, and get an analogous result (compare it also with Proposition~\ref{proposition3} and \eqref{gaita}). The proof follows essentially the same lines as that of Theorem~\ref{ponzio} (in fact, it is slightly simpler) so we omit it.

\begin{theorem}\label{maidana}
	 For a Banach space $X$ and  for $1\leq p \leq 2$ the following statements are equivalent:
	 \begin{enumerate}[label=(\alph*)]
	 	\item X has type $p$;
		
		\item for some (every) $1\leq q <\infty$ there exist constants $R,C\geq1$ such that every $X$-valued Dirichlet series $D= \sum a_{n} n^{-s}$ satisfies
	 	\[
		\|D\|_{\mathcal{H}_q(X)} \leq C \left(\sum_{n=1}^\infty R^{\Omega(n)}\|a_n\|^p\right)^{\frac{1}{p}};
		\]
	 	
	 	\item for some (every) $1\leq q <\infty$  there exist constants $C, R \geq1$ and such that every function $f\in H_1(\T^\infty,X)$  satisfies
	 	\[
	 	\|f \|_{H_q(\T^\infty,X)} \leq C \Bigg(\sum_{\alpha\in \N_0^{(\N)}} R^{|\alpha|} \|\hat f(\alpha)\|^p\Bigg)^{\frac{1}{p}};
	 	\]
	 	
		\item for some (every) $1\leq q <\infty$ there exist constants $C, R\geq1$ such that every function $f\in L_1(\{-1,1\}^\infty,X)$ satisfies
	 	\[
	 	\|f \|_{L_q(\{-1,1\}^\infty,X)} \leq C \Bigg(\sum_{\substack{A \subset \mathbb{N} \\ A \text{ finite}}} R^{ \vert A \vert}\|\hat f(A)\|^p\Bigg)^{\frac{1}{p}}.
	 	\]
	 \end{enumerate}
\end{theorem}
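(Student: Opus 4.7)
I will follow the same scheme as the proof of Theorem~\ref{ponzio}, showing that (a)$\Leftrightarrow$(c)$\Leftrightarrow$(b) and (a)$\Leftrightarrow$(d). The equivalence of (b) and (c) is immediate from the isometric identification of $\mathcal{H}_p(X)$ with $H_p(\T^\infty,X)$ via the Bohr transform \eqref{armani}. For (c)$\Rightarrow$(a) and (d)$\Rightarrow$(a), I would test the hypothesis on the degree-one polynomials $\sum x_n z_n$ and $\sum x_n\varepsilon_n$ respectively: in both cases the right-hand side reduces to $C R^{1/p}\bigl(\sum\|x_n\|^p\bigr)^{1/p}$, and the $L_q$-norm on the left dominates the $L_p$-norm appearing in the definition \eqref{deftipo} of Rademacher type --- either trivially when $q\ge p$, or via Kahane--Khintchine applied to the degree-one expression when $q<p$ --- so $X$ has type $p$.

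For (a)$\Rightarrow$(c), I fix $f\in H_1(\T^\infty,X)$ and assume $\sum_\alpha R^{|\alpha|}\|\hat{f}(\alpha)\|^p<\infty$, the inequality being vacuous otherwise. Writing $f_m=\sum_{|\alpha|=m}\hat{f}(\alpha)\,z^\alpha$ for the $m$-homogeneous part of $f$, Theorem~\ref{typimplicapoltyp}\ref{propositionx-b} (applied to a polynomial whose coefficients vanish off $|\alpha|=m$) yields
\[
\|f_m\|_p\le c^m\Bigl(\sum_{|\alpha|=m}\|\hat{f}(\alpha)\|^p\Bigr)^{1/p},
\]
and the polynomial Kahane inequality \eqref{proposition7} upgrades this to $\|f_m\|_q\le (c')^m\bigl(\sum_{|\alpha|=m}\|\hat{f}(\alpha)\|^p\bigr)^{1/p}$ for a constant $c'=c'(c,p,q)$. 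I then choose $R>(c')^p$ and apply H\"older with exponents $p,p'$ to the series $\sum_m\|f_m\|_q$, obtaining
\[
\sum_{m\ge 0}\|f_m\|_q\le \Bigl(\sum_{m\ge 0}\bigl((c')^{p'}R^{-p'/p}\bigr)^m\Bigr)^{1/p'}\Bigl(\sum_\alpha R^{|\alpha|}\|\hat{f}(\alpha)\|^p\Bigr)^{1/p},
\]
whose first factor is a convergent geometric series by the choice of $R$. Consequently $\sum_m f_m$ converges absolutely in $L_q(\T^\infty,X)$; its sum lies in $H_q(\T^\infty,X)$ and shares its Fourier coefficients with $f\in L_1$, so the two agree and the desired bound on $\|f\|_{H_q(\T^\infty,X)}$ follows.

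The argument for (a)$\Rightarrow$(d) is entirely parallel, substituting Theorem~\ref{typimplicapoltyp}\ref{propositionx-c} and the Walsh Kahane inequality \eqref{bonami} for their polytorus counterparts, and using that $f\in L_1(\{-1,1\}^\infty,X)$ is uniquely determined by its Walsh--Fourier coefficients. Unlike in Theorem~\ref{ponzio}\ref{ponzio4}, no $K$-convexity hypothesis is required: the Kahane-type inequalities are used here only to raise $L_p$-norms of homogeneous pieces to $L_q$-norms (the easy direction), rather than to project onto $m$-homogeneous subspaces, which is precisely why this proof is ``slightly simpler'' than that of Theorem~\ref{ponzio}. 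The main technical step I anticipate is the H\"older estimate: $R$ must be chosen uniformly large in terms of the polynomial type constant $c$ and of $p,q$ so that the geometric series converges; once this is arranged, density of polynomials together with the $L_q$-convergence of $\sum_m f_m$ handles the passage from polynomials to arbitrary functions.
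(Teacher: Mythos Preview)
Your proposal is correct and follows precisely the route the paper indicates (the paper omits the proof, stating only that it parallels Theorem~\ref{ponzio} and is ``slightly simpler''). In particular, your use of the triangle inequality $\|f\|_q\le\sum_m\|f_m\|_q$ in place of the $m$-homogeneous projection bound \eqref{esta} is exactly the simplification the paper alludes to, and it correctly explains why no $K$-convexity assumption is needed for the Walsh equivalence~(d); the only cosmetic point is that your H\"older display should be read as a supremum in the borderline case $p=1$, $p'=\infty$.
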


The preceding inequalities should be understood as follows: if the sum at the right-hand side is finite, then the Dirichlet series (or the function) belongs to the corresponding space and its norm is controlled by the sum. But if the sum does not converge, then nothing can be said about the series or the function.

\subsection{Uniform $\mathbb{C}$-convexity}

A Banach space $X$ is $q$-uniformly $\mathbb{C}$-convex \cite{Gl75} (for $q \geq 2$) if there exists $\lambda > 0$ such that
\[
\big( \Vert x \Vert^{q} + \lambda \Vert y \Vert^{q} \big)^{1/q} \leq
\max_{z \in \mathbb{T}} \Vert x + zy \Vert,
\]
for all $x,y \in X$ and $q$-uniformly $PL$-convex (see \cite{DaGaTJ84} or \cite[Chapter~11]{Pi16}) if
\begin{equation} \label{1890}
\Vert x \Vert^{q} + \lambda \Vert y \Vert^{q} \leq
\int_{\mathbb{T}} \Vert x + zy \Vert^{q} dz,
\end{equation}
for all $x,y \in X$. In fact these two concepts are equivalent (see \cite{Pa91}) and provide an analytic version of the more familiar geometric property known as $q$-uniform convexity. A Banach space $X$ is $q$-uniformly convex (for $q \geq 2$) if there exists $\lambda > 0$ such that
\[
 \Vert x \Vert^{q} + \lambda \Vert y \Vert^{q} \leq
\E\Vert x + \varepsilon y \Vert^q.
\]
It is easy to check that $q$-uniform convexity implies $q$-uniform $PL$-convexity.

In \cite[Proposition~2.1]{BlPa03} it is proven that $q$-uniform $\mathbb{C}$-convexity is equivalent to either of the following conditions:
\begin{enumerate}[label=(\alph*)]
	\item there exists $\lambda>0$ such that for every analytic function $f:\mathbb{D}\rightarrow X$ we have
		\begin{equation} \label{quilmescero}
		\Vert f(0) \Vert^{q} + \lambda \Vert f'(0) \Vert^{q}
		\leq  \sup_{|z|<1} \Vert f(z) \Vert^{q}.
		\end{equation}
	\item there exists $\lambda>0$ such that for every analytic function $f:\mathbb{D}\rightarrow X$ we have
		\begin{equation} \label{quilmes}
		\Vert f(0) \Vert^{q} + \lambda \Vert f'(0) \Vert^{q}
		\leq  \sup_{0<r<1}\,\int_{\mathbb{T}} \Vert f(rz) \Vert^{q} dz.
		\end{equation}
\end{enumerate}
Let us note that for every such function the mapping $r \in [0,1[ \rightsquigarrow \Vert  f (r \punkt) \Vert_{H_{q}(\mathbb{T},X)}$ is increasing and, then, the supremum at the right-hand side of \eqref{quilmes} is in fact a limit as $r \to 1^{-}$. With this, if $f : \mathbb{C} \to X$ is entire, then
\begin{equation} \label{stella}
\Vert f(0) \Vert^{q} + \lambda \Vert f'(0) \Vert^{q}
\leq
 \int_{\mathbb{T}}\Vert f(z) \Vert^{q} dz  \,.
\end{equation}
Since taking $f(z)= x+zy$ for given $x$ and $y$ gives \eqref{1890}, the equivalence with $q$-uniform $\mathbb{C}$-convexity is mantained.

Using \eqref{quilmescero} Blasco proved in \cite[Theorem~2.4]{Bl17} that $q$-uniformly $\mathbb{C}$-convex spaces have positive $q$-Bohr radius. That is, there exists $\rho>0$ such that
\begin{equation}\label{blas}
\Big( \sum_{n=0}^{\infty}  \Vert x_n  \Vert^{q} \rho^{qn} \Big)^{1/q}
\leq  \sup_{|z|<1} \|f(z)\|,
\end{equation}
for every analytic function $f=\sum_n x_n z^n$ on $\mathbb{D}$. Replacing \eqref{quilmescero} by \eqref{stella} in his argument we deduce that for $q$-uniformly $\mathbb{C}$-convex spaces there exists $\rho>0$ such that
\begin{equation}
\label{caso1}
\Big( \sum_{n=0}^{\infty} \Vert x_n  \Vert^{q} \rho^{qn} \Big)^{1/q}
 \le \Big(\int_{\mathbb{T}}\Vert f(z) \Vert^{q} dz \Big)^{1/q},
\end{equation}
for every entire function $f=\sum_n x_n z^n$. The following theorem extends this fact to several variables.

\begin{theorem}\label{isenbeck}
Let $X$ be a  $q$-uniformly $\mathbb{C}$-convex Banach space. Then there exists $\rho>0$ such that for every $n$ and every polynomial $P=\sum x_\alpha z^\alpha$ of $n$ variables with values in $X$ we have
\[
\Big( \sum_{\alpha} \Vert x_{\alpha}  \Vert^{q} \rho^{\vert \alpha \vert q} \Big)^{1/q}
\leq \Big(\int_{\mathbb{T}^{n}}\Vert P(z) \Vert^{q} dz \Big)^{1/q} \,.
\]
\end{theorem}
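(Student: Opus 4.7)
The plan is to proceed by induction on the number of variables $n$, using the one-variable case \eqref{caso1} as the base and peeling off one variable at a time. The constant $\rho$ obtained in \eqref{caso1} depends only on the space $X$, so it will remain the same through each induction step, which is crucial.

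For the base case $n=1$, the statement is exactly \eqref{caso1} applied to the polynomial $P$ (which is entire). For the inductive step, suppose the result holds for polynomials in $n-1$ variables. Given $P = \sum_\alpha x_\alpha z^\alpha$ in $n$ variables, I would group monomials by the exponent of $z_n$ and write
\[
P(z_1,\ldots,z_n) = \sum_{k=0}^{M} Q_k(z_1,\ldots,z_{n-1}) \, z_n^k,
\]
where $Q_k(z_1,\ldots,z_{n-1}) = \sum_{\beta} x_{(\beta,k)} z^\beta$ is an $X$-valued polynomial in $n-1$ variables. For each fixed $(z_1,\ldots,z_{n-1}) \in \mathbb{T}^{n-1}$, the map $z_n \mapsto P(z_1,\ldots,z_{n-1},z_n)$ is an $X$-valued polynomial in one variable with coefficients $Q_k(z_1,\ldots,z_{n-1})$, so \eqref{caso1} yields
\[
\sum_{k} \|Q_k(z_1,\ldots,z_{n-1})\|^q \rho^{kq} \leq \int_{\mathbb{T}} \|P(z_1,\ldots,z_{n-1},z_n)\|^q \, dz_n.
\]

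I would then integrate both sides over $\mathbb{T}^{n-1}$ with respect to $(z_1,\ldots,z_{n-1})$, interchanging the (finite) sum and the integral on the left and applying Fubini on the right, to obtain
\[
\sum_{k} \rho^{kq} \, \|Q_k\|_{L_q(\mathbb{T}^{n-1},X)}^{q} \leq \int_{\mathbb{T}^n} \|P(z)\|^q \, dz.
\]
Now the inductive hypothesis applied to each $Q_k$ gives
\[
\sum_{\beta} \|x_{(\beta,k)}\|^q \rho^{|\beta|q} \leq \|Q_k\|_{L_q(\mathbb{T}^{n-1},X)}^{q}.
\]
Combining the last two displays and noting that $\sum_k \rho^{kq} \sum_\beta \|x_{(\beta,k)}\|^q \rho^{|\beta|q} = \sum_\alpha \|x_\alpha\|^q \rho^{|\alpha|q}$ completes the induction.

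This proof is essentially mechanical once one has \eqref{caso1}; there is no real obstacle. The only subtlety worth mentioning is that the constant $\rho$ coming from the one-variable inequality is dimension-independent (it depends solely on the modulus $\lambda$ of $q$-uniform $\mathbb{C}$-convexity of $X$), which is what makes the iterated application collapse into the clean bound $\rho^{|\alpha|q}$ for multi-indices of arbitrary length.
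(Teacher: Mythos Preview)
Your proof is correct and follows essentially the same induction-on-$n$ strategy as the paper, peeling off the variable $z_n$ and combining the one-variable inequality \eqref{caso1} with the inductive hypothesis via Fubini. The only cosmetic difference is the order: you apply the one-variable case first and then the inductive hypothesis, whereas the paper does it the other way around, but the two arguments are interchangeable.
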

\begin{proof}
	We proceed by induction on $n$, the number of variables. The case $n=1$ follows from \eqref{caso1}.

Suppose now that the result holds for $n-1$ and take some polynomial
\[
P(z) = \sum_{\alpha \in F} x_{\alpha} z^{\alpha} \,,
\]
for $z \in \mathbb{C}^{n}$ (where $F \subseteq \mathbb{N}_{0}^{n}$ is finite). Then we can write
\[
\sum_{\alpha} \Vert x_{\alpha}  \Vert^{q} \rho^{\vert \alpha \vert q}
= \sum_{k=0}^{N} \rho^{qk} \sum_{\substack {\alpha \in F \\ \alpha_{n}=k}}   \Vert x_{\alpha}  \Vert^{q} \rho^{(\vert \alpha \vert - \alpha_{n}) q} \,.
\]
Applying the inductive hypothesis to each polynomial
\[
z \in \mathbb{C}^{n-1} \rightsquigarrow \sum_{\substack {\alpha \in F \\ \alpha_{n}=k}}   x_{\alpha} z_{1}^{\alpha_{1}} \cdots z_{n-1}^{\alpha_{n-1}},
\]
 we have
\begin{multline*}
\sum_{\alpha} \Vert x_{\alpha}  \Vert^{q} \rho^{\vert \alpha \vert q}
\leq \sum_{k=0}^{N} \rho^{qk} \int_{\mathbb{T}^{n-1}} \Big\Vert \sum_{\substack {\alpha \in F \\ \alpha_{n}=k}}   c_{\alpha} z_{1}^{\alpha_{1}} \cdots z_{n-1}^{\alpha_{n-1}} \Big\Vert^{q} d(z_{1}, \ldots , z_{n-1}) \\
= \int_{\mathbb{T}^{n-1}}\sum_{k=0}^{N}   \Big\Vert \sum_{\substack {\alpha \in F \\ \alpha_{n}=k}}   c_{\alpha} z_{1}^{\alpha_{1}} \cdots z_{n-1}^{\alpha_{n-1}} \Big\Vert^{q} \rho^{qk} d(z_{1}, \ldots , z_{n-1}) \,.
\end{multline*}
Finally, for each fixed $(z_{1}, \ldots , z_{n-1}) \in \mathbb{T}^{n-1}$ we may consider the polynomial $\mathbb{C} \to X$ given by
\[
z \rightsquigarrow \sum_{k=0}^{n}
\Big( \sum_{\substack {\alpha \in F \\ \alpha_{n}=k}}   c_{\alpha} z_{1}^{\alpha_{1}} \cdots z_{n-1}^{\alpha_{n-1}} \Big) z^{k}
\]
and then use the case $n=1$ of the induction to conclude
\begin{multline*}
\sum_{k=0}^{N}   \Big\Vert \sum_{\substack {\alpha \in F \\ \alpha_{n}=k}}   x_{\alpha} z_{1}^{\alpha_{1}} \cdots z_{n-1}^{\alpha_{n-1}} \Big\Vert^{q} \rho^{qk} \\
\leq \int_{\mathbb{T}} \Big\Vert \sum_{k=0}^{N}
\Big( \sum_{\substack {\alpha \in F \\ \alpha_{n}=k}}   x_{\alpha} z_{1}^{\alpha_{1}} \cdots z_{n-1}^{\alpha_{n-1}} \Big) z_{n}^{k} \Big\Vert^{q} dz_{n}
= \int_{\mathbb{T}}\Big\Vert \sum_{\alpha \in F}   x_{\alpha} z_{1}^{\alpha_{1}} \cdots z_{n-1}^{\alpha_{n-1}}z_{n}^{\alpha_{n}} \Big\Vert^{q} dz_{n} \,.
\end{multline*}
Fubini's theorem completes the proof.
\end{proof}

Let us note that Theorem~\ref{isenbeck} can be reformulated as
\begin{equation}\label{radio}
\Big( \sum_{n \leq x} \Vert a_{n} \Vert^{q} \rho^{q\Omega (n)} \Big)^{\frac{1}{q}}
\leq  \Big\Vert \sum_{n \leq x} a_{n} n^{-s} \Big\Vert_{\mathcal{H}_{q}(X)}
\leq  \Big\Vert \sum_{n \leq x} a_{n} n^{-s} \Big\Vert_{\mathcal{H}_{\infty}(X)} \,,
\end{equation}
for every Dirichlet polynomial. This gives a version of the $q$-Bohr radius for vector-valued Dirichlet series (although for better constants one should proceed to the multivariate setting directly from \eqref{blas}). Also, the first inequality in \eqref{radio} gives the equivalence~\ref{ponzio3} of Theorem~\ref{ponzio} with constant $C=1$ (taking $r=\rho^q$ and $p=q$). Hence, for $q$-uniformly $\mathbb C$-convex Banach spaces we have better Hausdorff-Young inequalities than those for general spaces with cotype $q$.

\section{Proof of Theorem~\ref{cotimplicapolcot}} \label{sec1}

We face now the proof to Theorem~\ref{cotimplicapolcot}. Let us recall that we are aiming at inequalities like \eqref{eq3} for every polynomial of $n$ variables of degree $m$. We begin by showing that such an inequality
indeed holds for homogeneous tetrahedral polynomials. But before we get into that let us note that
given a vector space $V$, a family $\{ v_{A} \colon A \subseteq [n], \, \vert A \vert =m \}\subseteq V$ (where $n,m \in \mathbb{N}$) and $k \in \mathbb{N}$ we have 
\begin{equation}\label{idcomb}
\begin{split}
	\sum_{\substack{B\subseteq[n] \\ |B|=k}}
	&\sum_{\substack{A_1\subseteq B \\ |A_1|=1}}
	\sum_{\substack{A_2\subseteq B^c \\ |A_2|=m-1}} v_{A_1\cup A_2}
	= 
	\sum_{\substack{B\subseteq[n] \\ |B|=k}}
	\sum_{\substack{A\subseteq [n] \\ |A|=m \\|A\cap B|=1}} v_A
	= 
	\sum_{\substack{A\subseteq[n] \\ |A|=m}}
	\sum_{\substack{B\subseteq [n] \\ |B|=k \\ |A\cap B|=1}} v_A
	\\ &= 
	\sum_{\substack{A\subseteq[n] \\ |A|=m}}
	\big|\{B\subseteq [n] : \ |B|=k, \ |A\cap B|=1\}\big| v_A
	= 
	m \binom{n-m}{k-1} \sum_{\substack{A\subseteq [n] \\ |A|=m}} v_A.
\end{split}
\end{equation}

This is essentially equations (3.13) through (3.16) from \cite{RzWo_19}.
Once we have this we can prove that the inequality we aim at holds for homogeneous tetrahedral polynomials.

\begin{lemma}\label{cotawalsh}
Let $X$ be a Banach space $X$ of cotype $2\leq q<\infty$. Then for every $m,n\in \N$ and every family $\{ x_{A} \colon A \subseteq [n], \, \vert A \vert =m \} \subseteq X$ we have
\begin{equation}\label{cotwalsh}
	\Big( \sum_{A} \Vert x_{A} \Vert^{q} \Big)^{\frac{1}{q}}
\leq (4^{1/q} C_{q}(X)) ^{m} \bigg( \int_{\T^n}  \Big\Vert  \sum_{A}  x_{A} z_{A} \Big\Vert^{q}  dz \bigg)^{\frac{1}{q}} \,.
\end{equation}
\end{lemma}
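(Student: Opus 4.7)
I plan to argue by induction on $m$. The base case $m=1$ is exactly the definition of cotype $q$ of $X$ applied to the vectors $x_1,\dots,x_n$; the resulting constant $C_q(X)$ is dominated by $4^{1/q}C_q(X)$.

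For the inductive step, fix $m\ge 2$ and assume the inequality holds for $(m-1)$-homogeneous tetrahedral polynomials with constant $K^{m-1}$, where $K:=4^{1/q}C_q(X)$. I would choose a parameter $k$ (most naturally $k=m$) and, for each $B\subseteq[n]$ with $|B|=k$, introduce the \emph{tube polynomial}
$$
P_B(z) \;=\; \sum_{\substack{|A|=m \\ |A\cap B|=1}} x_A\, z_A \;=\; \sum_{j\in B} z_j\, Q_j^B(z^{B^c}),
\qquad
Q_j^B(z^{B^c}) \;:=\; \sum_{\substack{|A'|=m-1 \\ A'\subseteq B^c}} x_{\{j\}\cup A'}\, z_{A'}.
$$
Since $P_B$ is linear in the variables $z^B$ and each coefficient $Q_j^B$ is $(m-1)$-homogeneous tetrahedral in $z^{B^c}$, applying cotype in the $z^B$ variables (at each fixed $z^{B^c}$) followed by the inductive hypothesis for each $Q_j^B$ would give, after integrating in $z^{B^c}$,
$$
\sum_{\substack{|A|=m\\|A\cap B|=1}}\|x_A\|^q \;\le\; C_q(X)^q\, K^{q(m-1)} \int_{\T^n}\|P_B(z)\|^q\,dz.
$$
Summing over $B$ and invoking the combinatorial identity \eqref{idcomb} with $v_A=\|x_A\|^q$ then yields
$$
m\binom{n-m}{k-1}\sum_{|A|=m}\|x_A\|^q \;\le\; C_q(X)^q\, K^{q(m-1)}\sum_{|B|=k}\int_{\T^n}\|P_B\|^q\,dz.
$$

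The crucial remaining step is to establish
$$
\sum_{|B|=k}\int_{\T^n}\|P_B\|^q\,dz \;\le\; 4\,m\binom{n-m}{k-1}\int_{\T^n}\|P\|^q\,dz,\qquad(\star)
$$
for then $\sum\|x_A\|^q\le 4\,C_q(X)^q\,K^{q(m-1)}\|P\|_{L_q}^q = K^{qm}\|P\|_{L_q}^q$, closing the induction. To attack $(\star)$ I would use the Fourier extraction formula
$$
P_B(z) \;=\; \int_{\T^{|B|}} P(\tau\odot z^B,\, z^{B^c})\,\Bigl(\sum_{j\in B}\bar\tau_j\Bigr)\,d\tau,
$$
obtained by reading off the degree-one part of $P$ in the $\tau$ variable of the polynomial $\tau\mapsto P(\tau\odot z^B,z^{B^c})$. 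Combined with rotation invariance of Lebesgue measure on $\T^n$ and Jensen's inequality, this already delivers the pointwise bound $\|P_B\|_{L_q}\le\|P\|_{L_q}$ for each fixed $B$; the combinatorial factor $4$ would then have to emerge from a refined bookkeeping in the sum over $B$.

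The main obstacle I anticipate is precisely $(\star)$: summing the naive pointwise bound over all $\binom{n}{k}$ subsets $B$ yields a prefactor $\binom{n}{k}$ that grows with $n$ for any fixed choice of $k$, so one cannot treat the tubes $P_B$ in isolation. The resolution must exploit the orthogonality of the kernels $K_B(\tau)=\sum_{j\in B}\bar\tau_j$ as $B$ ranges over $k$-subsets, together with the combinatorial identity \eqref{idcomb} itself, in order to balance $\binom{n}{k}$ against $m\binom{n-m}{k-1}$ and arrive at the clean constant $4$ that is independent of $n$. Tuning $k$ (to $m$ or a nearby value) and carrying out this Fourier-analytic balancing is the technical heart of the lemma.
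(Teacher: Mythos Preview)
Your overall architecture matches the paper's: induction on $m$, the combinatorial identity \eqref{idcomb}, cotype in one block of variables and the inductive hypothesis in the complementary block, and a Fourier projection to recover $P$ from the tube polynomials $P_B$. The gap is in the step you flag as the ``main obstacle'', and it is twofold.

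First, your extraction formula with the multivariable kernel $K_B(\tau)=\sum_{j\in B}\bar\tau_j$ does \emph{not} give $\|P_B\|_{L_q}\le\|P\|_{L_q}$ via Jensen and rotation invariance: you pick up $\|K_B\|_{L_1(\T^{|B|})}\sim\sqrt{k}$. The paper instead uses a \emph{single} auxiliary variable $\omega\in\T$: replacing $z_i$ by $\omega z_i$ for $i\in B$ turns $P$ into $\sum_{j\ge 0}\omega^j(\text{terms with }|A\cap B|=j)$, and integrating against $\bar\omega$ isolates $P_B$ with kernel of $L_1$-norm one. That is what makes $\|P_B\|_{L_q}\le\|P\|_{L_q}$ work for every fixed $B$.

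Second, and more importantly, no orthogonality or ``Fourier-analytic balancing'' is needed: the naive bound summed over all $B$ is already enough, \emph{provided you choose $k$ correctly}. Your suggestion $k=m$ gives $\binom{n}{k}\big/\bigl(m\binom{n-m}{k-1}\bigr)\sim n/m^2$, which blows up in $n$. The paper's choice is $k=n/m$ (one may enlarge $n$ so that $m\mid n$), and then Stirling yields $\binom{n}{k}\le 4\,m\binom{n-m}{k-1}$ uniformly. With this $k$, your inequality $(\star)$ follows immediately from the single-variable projection bound, and the factor $4$ in $(\star)$ is exactly the source of the $4^{1/q}$ in the constant $K$. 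So the ``technical heart'' you are looking for is not an analytic cancellation argument but simply the observation that $k$ should scale like $n/m$ rather than like $m$.
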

\begin{proof}
	We prove this by induction on $m$. The case $m=1$ is trivial just comparing \eqref{defcotipo} with \eqref{cotwalsh} for $m=1$.
	
For the inductive step, let $\{ x_{A} \colon A \subseteq [n], \, \vert A \vert =m \}$ be a family of vectors in $X$. Taking $n$ larger if necessary, we may assume $n=km$ for some $k\in \N$. Note that, in this case, $\binom{n-m}{k-1} = \binom{(k-1)m}{k-1}$ and a straightforward application of Stirling's formula yields
\[
	\frac{1}{2} \leq \frac{\binom{n}{k}}{m \binom{n-m}{k-1}} \leq 4.
\]
Using \eqref{idcomb} for $v_A=\|x_A\|^q$ we get
	\[
	\sum_{\substack{A\subseteq [n] \\ |A|=m}} \|x_A\|^q
	= \frac{1}{m \binom{n-m}{k-1}}
	\sum_{\substack{B\subseteq[n] \\ |B|=k}}
	\sum_{\substack{A_1\subseteq B \\ |A_1|=1}}
	\sum_{\substack{A_2\subseteq B^c \\ |A_2|=m-1}} \|x_{A_1\cup A_2}\|^q
	\leq \frac{4}{\binom{n}{k}}
	\sum_{\substack{B\subseteq[n] \\ |B|=k}}
	\sum_{\substack{A_1\subseteq B \\ |A_1|=1}}
	\sum_{\substack{A_2\subseteq B^c \\ |A_2|=m-1}} \|x_{A_1\cup A_2}\|^q.
	\]
	For a fixed $A_1$ we can apply the inductive hypothesis to the family $\{ x_{A_1\cup A_2} \colon A_2 \subseteq B^c, \, \vert A_2 \vert =m-1 \}$. Let $\T^{B^c}$ denote $|B^c|$ copies of the torus indexed in $B^c$. We get
	\begin{align*}
	\sum_{\substack{A\subseteq [n] \\ |A|=m}} \|x_A\|^q
	&\leq \frac{4^{m} C_{q}(X)^{q(m-1)}}{\binom{n}{k}}
	\sum_{\substack{B\subseteq[n] \\ |B|=k}}
	\sum_{\substack{A_1\subseteq B \\ |A_1|=1}}
	\int_{\T^{B^c}}  \Big\|\sum_{\substack{A_2\subseteq B^c \\ |A_2|=m-1}}
	x_{A_1\cup A_2} z_{A_2}\Big\|^{q} dz
	\\ &\leq \frac{4^{m} C_{q}(X)^{q(m-1)}}{\binom{n}{k}}
	\sum_{\substack{B\subseteq[n] \\ |B|=k}}
	\int_{\T^{B^c}}  \sum_{\substack{A_1\subseteq B \\ |A_1|=1}}
	\Big\|\sum_{\substack{A_2\subseteq B^c \\ |A_2|=m-1}}
	x_{A_1\cup A_2} z_{A_2}\Big\|^{q} dz.
	\end{align*}
	Notice that we are integrating over variables $z_i$ whose index $i$ always lies in $B^c$, while $A_1$ is always included in $B$. In some sense, the variables $z_{A_1}$ remain unused. So, by the cotype inequality \eqref{defcotipo} we obtain
	\begin{equation}\label{ec22}
	\sum_{\substack{A\subseteq [n] \\ |A|=m}} \|x_A\|^q
	\leq\frac{4^{m} C_q(X)^{qm}}{\binom{n}{k}}
	\sum_{\substack{B\subseteq[n] \\ |B|=k}}
	\int_{\T^n} \Big\|\sum_{\substack{A_1\subseteq B \\ |A_1|=1}}
	\sum_{\substack{A_2\subseteq B^c \\ |A_2|=m-1}}
	x_{A_1\cup A_2} z_{A_1} z_{A_2}\Big\|^{q} dz.
	\end{equation}
	Regarding the last expression, observe that
	\begin{equation}\label{ecproy}
	\begin{split}
	\int_{\T} \Big(\sum_{j=0}^{\min(k,m)} & \sum_{\substack{A_1\subseteq B \\ |A_1|=j}}
	\sum_{\substack{A_2\subseteq B^c \\ |A_2|=m-j}}
	x_{A_1\cup A_2}  \omega^{|A_1|}z_{A_1} z_{A_2}\Big) \overline{\omega} d\omega \\
	& = \sum_{j=0}^{\min(k,m)} \int_{\T} \omega^{j-1} d\omega
	\sum_{\substack{A_1\subseteq B \\ |A_1|=j}}
	\sum_{\substack{A_2\subseteq B^c \\ |A_2|=m-j}}
	x_{A_1\cup A_2} z_{A_1} z_{A_2}
	=\sum_{\substack{A_1\subseteq B \\ |A_1|=1}}
	\sum_{\substack{A_2\subseteq B^c \\ |A_2|=m-1}}
	x_{A_1\cup A_2} z_{A_1} z_{A_2}.
	\end{split}
	\end{equation}
	Using \eqref{ecproy} in \eqref{ec22} and applying Jensen's inequality we have
	\begin{align*}
	\sum_{\substack{A\subseteq [n] \\ |A|=m}} \|x_A\|^q
	&\leq\frac{4^{m} C_{q}(X)^{qm}}{\binom{n}{k}}
	\sum_{\substack{B\subseteq[n] \\ |B|=k}}
	\int_{\T} \int_{\T^n} \Big\|\sum_{j=0}^{\min(k,m)}
	\sum_{\substack{A_1\subseteq B \\ |A_1|=j}}
	\sum_{\substack{A_2\subseteq B^c \\ |A_2|=m-j}}
	x_{A_1\cup A_2}  \omega^{|A_1|} z_{A_1} z_{A_2}\Big\|^q dz d\omega.
	\end{align*}
	Finally, by rotation invariance $\omega^{|A_1|} z_{A_1}$ may be replaced by $z_{A_1}$. We get
	\begin{align*}
	\sum_{\substack{A\subseteq [n] \\ |A|=m}} \|x_A\|^q
	&\leq\frac{4^{m} C_{q}(X)^{qm}}{\binom{n}{k}}
	\sum_{\substack{B\subseteq[n] \\ |B|=k}}
	\int_{\T^n} \Big\|\sum_{j=0}^{\min(k,m)}
	\sum_{\substack{A_1\subseteq B \\ |A_1|=j}}
	\sum_{\substack{A_2\subseteq B^c \\ |A_2|=m-j}}
	x_{A_1\cup A_2}  z_{A_1} z_{A_2}\Big\|^q dz
	\\ &=\frac{4^{m} C_{q}(X)^{qm}}{\binom{n}{k}}
	\sum_{\substack{B\subseteq[n] \\ |B|=k}}
	\int_{\T^n} \Big\|
	\sum_{\substack{A\subseteq [n] \\ |A|=m}}
	x_{A}  z_{A}\Big\|^q dz
	=4^{m} C_{q}(X)^{qm}
	\int_{\T^n} \Big\|
	\sum_{\substack{A\subseteq [n] \\ |A|=m}}
	x_{A}  z_{A}\Big\|^q dz. \qedhere
	\end{align*}
\end{proof}

In order to deal with Walsh polynomials in Theorem~\ref{cotimplicapolcot}~\ref{proposition5-c} we need two lemmas. The first one shows that tetrahedral Steinhaus polynomials and their Walsh counterparts have equivalent norms up to exponential constants. The argument translates estimates from the scalar to the Banach setting applying a theorem of Pe{\l}czy\'nski. This result can also be proven using \cite[Proposition 6.3.1]{KwWo92} and checking the hypothesis by hand.

\begin{lemma} \label{lemma1}
 Let $X$ be a Banach space and set $1\leq q <\infty$. For every tetrahedral polynomial $P$ of degree $m$ and $n$ variables we have
 \begin{equation}\label{sw}
   (1+\sqrt{2})^{-m} \left(\mathbb{E}\|P(\varepsilon)\|_X^q\right)^{1/q}
   \leq	\Big( \int_{\mathbb{T}^{n}} \Vert  P(z) \Vert_X^{q} dz \Big)^{1/q}
   \leq (1+\sqrt{2})^{m} \left(\mathbb{E}\|P(\varepsilon)\|_X^q\right)^{1/q}.
\end{equation}
\end{lemma}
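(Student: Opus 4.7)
The plan is to reduce the vector-valued comparison in \eqref{sw} to its scalar counterpart via a Pe{\l}czy\'nski-type transfer principle, and then to establish the scalar case directly.

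For the transfer, observe that both $\{z_A\}_{A\subseteq[n]}$ and $\{\varepsilon_A\}_{A\subseteq[n]}$ are orthonormal character systems of the compact abelian groups $\mathbb{T}^n$ and $\{-1,1\}^n$, respectively. By \cite[Proposition~6.3.1]{KwWo92}, any scalar $L^q$-inequality between two such character systems extends to arbitrary Banach-space-valued coefficients with the same constant; the verification of its hypothesis boils down to orthonormality and translation invariance of the Haar measures, both of which are automatic here. Applying the principle in both directions reduces the lemma to the scalar statement
\[
(1+\sqrt{2})^{-m}\,\|P\|_{L_q(\{-1,1\}^n)} \leq \|P\|_{L_q(\mathbb{T}^n)} \leq (1+\sqrt{2})^m\,\|P\|_{L_q(\{-1,1\}^n)}
\]
for every scalar tetrahedral polynomial $P$ of degree at most $m$.

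For the scalar case, the key observation is that the $L^2$ norms on the two sides coincide: by orthonormality, both equal $(\sum_A |c_A|^2)^{1/2}$. The $L^q$-comparisons should then follow by combining this $L^2$ identity with a cruder one-variable comparison. Concretely, the base case $\sup_{z\in\mathbb{T}}|u+zv| = |u|+|v| \leq \sqrt{2}\,\max_{\pm}|u\pm v|$, together with the tetrahedral (degree-$\le 1$ in each variable) and degree-$\leq m$ structure of $P$, suggests that the constant accumulates one factor of $1+\sqrt{2}$ per degree level rather than per variable, yielding a bound uniform in $q$ of the form $(1+\sqrt{2})^m$.

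The main obstacle is to make precise the ``one factor per degree level'' heuristic so as to obtain the constant $(1+\sqrt{2})^m$ \emph{uniformly in $q$}. Standard hypercontractivity (Bonami--Beckner) gives constants that grow with $q$ and will not suffice. The cleanest route is to realize the linear map $\sum c_A z_A \mapsto \sum c_A \varepsilon_A$ as a convolution (or pullback of a convolution) against a signed measure of total variation at most $(1+\sqrt{2})^m$, so that Young's inequality provides the $L^q\to L^q$ bound with the correct constant uniformly in $q$. The degree-by-degree construction of such a measure, built from the one-variable comparison above and iterated compatibly with the tetrahedral structure, is where the real work lies.
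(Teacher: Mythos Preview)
Your proposal has a genuine gap: the scalar $L^q$ comparison, which you correctly identify as the crux, is never actually established. You sketch a convolution-with-a-small-measure strategy and then explicitly concede that ``the real work lies'' in carrying it out. As written, this is a plan rather than a proof. Moreover, your description of the hypothesis of \cite[Proposition~6.3.1]{KwWo92} as reducing to ``orthonormality and translation invariance'' is too optimistic; the paper itself notes that using that route requires ``checking the hypothesis by hand,'' which is not what you do.

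The paper's argument avoids the scalar $L^q$ problem entirely. It establishes only the scalar $L^\infty$ comparison
\[
\sup_{\varepsilon\in\{-1,1\}^n}\Big|\sum_{|A|\le m}c_A \varepsilon_A\Big|
\le \sup_{z\in\T^n}\Big|\sum_{|A|\le m}c_A z_A\Big|
\le (1+\sqrt{2})^m \sup_{\varepsilon\in\{-1,1\}^n}\Big|\sum_{|A|\le m}c_A \varepsilon_A\Big|,
\]
the right-hand inequality coming from Klimek's bound $\sup_{\T^n}|Q|\le (1+\sqrt 2)^m\sup_{[-1,1]^n}|Q|$ for degree-$m$ polynomials, combined with the observation that a tetrahedral polynomial is affine in each variable and hence attains its supremum over $[-1,1]^n$ at the vertices $\{-1,1\}^n$. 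Then Pe{\l}czy\'nski's theorem \cite[Theorem~1]{Pel88} is invoked: a sup-norm comparison between two finite character systems on compact abelian groups automatically upgrades to the vector-valued $L^q$ comparison with the same constant, for every $1\le q<\infty$. No separate argument for scalar $L^q$ is needed. If you want to salvage your approach, the cleanest fix is to replace your unfinished scalar-$L^q$ step by this $L^\infty$ step and switch the transfer principle to \cite{Pel88}.
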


\begin{proof}
	In \cite[p.~2764]{kli95} it is shown that for every polinomial $Q:\C^n\rightarrow \C$ of degree $m$, we have
	\[\sup_{z\in\T^n}|Q(z)|\leq (1+\sqrt 2)^m \sup_{x\in[-1,1]^n} |Q(x)|.\]
	If we assume $Q$ to be tetrahedral, we observe as in \cite{DeMaPe19} that
	\[\sup_{x\in[-1,1]^n} |Q(x)|= \sup_{\varepsilon\in\{-1,1\}^n} |Q(\varepsilon)|,\]
	since $Q$ is affine in every coordinate. Thus,
	\begin{equation*}
	\sup_{\varepsilon\in\{-1,1\}^n}|Q(\varepsilon)|\le \sup_{z\in\T^n}|Q(z)|\leq (1+\sqrt 2)^m \sup_{\varepsilon\in\{-1,1\}^n}|Q(\varepsilon)|.
	\end{equation*}
	
Equivalently, for every finite choice of scalars $\{c_A\}_{|A|\le m}\subseteq \C$ we have
\begin{equation}
\label{eqinfty}
\sup_{\varepsilon\in\{-1,1\}^n}\Big|\sum_{|A|\le m}c_A \varepsilon_A\Big|
 \leq \sup_{z\in\T^n}\Big| \sum_{|A|\le m}c_A z_A\Big|
 \leq (1+\sqrt{2})^{m} \sup_{\varepsilon\in\{-1,1\}^n}\Big|\sum_{|A|\le m}c_A \varepsilon_A\Big|,
\end{equation}
where for simplicity we also used Walsh notation for the variable $z$.
Consider the sets of characters
$\{\varepsilon_A\}_{|A|\le m}$ and
$\{z_A\}_{|A|\le m}$ of the compact abelian groups $\{-1,1\}^n$ and $\T^n$ respectively. Since these sets satisfy \eqref{eqinfty}, the conditions of
\cite[Theorem~1]{Pel88} are met. So we get
	\begin{multline*}
	(1+\sqrt{2})^{-m} \Big\|\sum_{|A|\le m}x_A \varepsilon_A\Big\|_{L^q(\{-1,1\}^n,X)}
	 \leq \Big\| \sum_{|A|\le m}x_A z_A \Big\|_{L^q(\T^n,X)}
	\\ \leq (1+\sqrt{2})^{m} \Big\|\sum_{|A|\le m}x_A \varepsilon_A\Big\|_{L^q(\{-1,1\}^n,X)},
	\end{multline*}
	 for every choice of vectors $\{x_A\}_{|A|\le m}\subseteq X$.
	This concludes the proof since it is equivalent to \eqref{sw}.
\end{proof}

The following lemma estimates the norm of the homogeneous projection of a Walsh polynomial and can be found in \cite[Lemma~2]{Kw87} (see also \cite[Lemma 3.2.4]{DPGi99}). A proof is included since the constant  is not explicitly computed there, and we need it to grow exponentially on the degree of the polynomial (i.e., to be of the form $B^m$ for some $B>0$).

\begin{lemma} \label{lemma3}
	For every Banach space $X$ there is a constant $B>0$ such that for every $1\leq q<\infty$ and every $X$-valued Walsh polynomial $P$ of degree $m$, its $k$-homogeneous projection $P_k$ satisfies
	\[
	\big(\mathbb{E}\|P_k(\varepsilon)\|_X^q\big)^{1/q}
	\leq B^m \big(\mathbb{E}\|P(\varepsilon)\|_X^q \big)^{1/q}.
	\]
\end{lemma}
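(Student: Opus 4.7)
The plan is to lift the problem from the discrete cube $\{-1,1\}^n$ to the torus $\mathbb{T}^n$, where the $k$-homogeneous projection is trivially a contraction, and then transfer back using Lemma~\ref{lemma1}. First, to each Walsh polynomial $P(\varepsilon)=\sum_{|A|\le m} x_A \varepsilon_A$ of degree at most $m$ I would associate its tetrahedral counterpart $\widetilde P(z)=\sum_{|A|\le m} x_A z_A$ on $\mathbb{T}^n$. Under this correspondence the $k$-homogeneous projection $\widetilde P_k(z)=\sum_{|A|=k} x_A z_A$ is exactly the tetrahedral polynomial associated to $P_k$. Lemma~\ref{lemma1}, applied once in each direction to polynomials of degree at most $m$, then yields
\[
\|P_k\|_{L^q(\{-1,1\}^n,X)} \le (1+\sqrt{2})^m \|\widetilde P_k\|_{L^q(\mathbb{T}^n,X)},\qquad \|\widetilde P\|_{L^q(\mathbb{T}^n,X)} \le (1+\sqrt{2})^m \|P\|_{L^q(\{-1,1\}^n,X)}.
\]

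The second step is the torus inequality $\|\widetilde P_k\|_{L^q(\mathbb{T}^n,X)} \le \|\widetilde P\|_{L^q(\mathbb{T}^n,X)}$. This I would obtain from the contour identity
\[
\widetilde P_k(z) = \int_{\mathbb{T}} \widetilde P(wz)\, \bar w^{k}\, dw,
\]
which is immediate from $\int_{\mathbb{T}} w^{|A|-k}\, dw = \delta_{|A|,k}$. Jensen's inequality together with the rotation invariance of the Haar measure on $\mathbb{T}^n$ (the change of variables $z\mapsto wz$ for each fixed $w\in\mathbb{T}$) gives the claim.

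Chaining the three estimates produces $\|P_k\|_{L^q(\{-1,1\}^n,X)} \le (1+\sqrt{2})^{2m} \|P\|_{L^q(\{-1,1\}^n,X)}$, so that one may take $B=(1+\sqrt{2})^2$; in fact this constant is independent of $X$, $q$ and $k$, which is somewhat stronger than the statement asks for. I do not foresee a real obstacle, since Lemma~\ref{lemma1} carries the nontrivial content. The only point to be careful about is that Lemma~\ref{lemma1} must be applied with the same exponent $m$ at both ends of the chain; this is legitimate because $P_k$ is tetrahedral of degree $k\le m$, so that $(1+\sqrt{2})^k\le (1+\sqrt{2})^m$ may be used freely.
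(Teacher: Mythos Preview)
Your argument is correct. The contour identity and the rotation invariance of Haar measure on $\mathbb{T}^n$ give the contraction $\|\widetilde P_k\|_{L^q(\mathbb{T}^n,X)}\le\|\widetilde P\|_{L^q(\mathbb{T}^n,X)}$, and Lemma~\ref{lemma1} (which does not depend on Lemma~\ref{lemma3}) transfers this back to $\{-1,1\}^n$ with the explicit constant $B=(1+\sqrt 2)^2$. There is no circularity and no hidden dependence on $X$ or $q$.

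The paper takes a different, intrinsic route that stays entirely on the Boolean cube: it constructs polynomials $p_j^{(m)}$ biorthogonal to $1,t,\dots,t^m$ on $(0,1)$ (their coefficient matrix is the inverse of the Hilbert matrix), uses the identity $P_k(\varepsilon)=\int_0^1 P(t\varepsilon)\,p_{k+1}^{(m)}(t)\,dt$, bounds $\sup_t|p_j^{(m)}(t)|\le B^m$ from the growth of the Hilbert inverse, and finishes with the contraction principle $(\mathbb{E}\|P(t\varepsilon)\|^q)^{1/q}\le(\mathbb{E}\|P(\varepsilon)\|^q)^{1/q}$ for $0\le t\le1$. The advantage of the paper's proof is that it is self-contained and does not invoke Pe{\l}czy\'nski's theorem (hidden inside Lemma~\ref{lemma1}); the advantage of yours is brevity and an explicit small constant, and it fits naturally with the transfer philosophy already exploited in Remark~\ref{expo} to derive \eqref{bonami}.
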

\begin{proof}
	For each $m$ we consider the functions $\{1, t,\ldots, t^m\}$ in $L_2(0,1)$. We show that there are polynomials $\{p_1^{(m)},\ldots,p_{m+1}^{(m)}\}$ of degree at most $m$ such that
	\[
	\int_{0}^1 t^{i-1} p_j^{(m)}(t) \, d t=\delta_{ij},
	\]
	for every $1\leq i,j\leq m+1$. Indeed, writing $p_j^{(m)}(t)=\sum_{k=1}^{m+1}a_{kj}^{(m)}t^{k-1}$ we get
	\[
	\delta_{ij}=\int_{0}^1 t^{i-1} p_j^{(m)}(t)  d t
	=\sum_{k=1}^{m+1}a_{kj}^{(m)}\int_{0}^1 t^{i+k-2}  d t
	=\sum_{k=1}^{m+1}\frac{1}{i+k-1}a_{kj}^{(m)},
	\]
	for every $1\leq i,j\leq m+1$. In other words,  we obtain the matrix identity
	\[
	I=HA,
	\]
	where $H$ is the well-known Hilbert matrix and $A$ is the matrix defined by the coefficients $a_{ij}^{(m)}$. Thus, we have $A=H^{-1}$, which provides a specific formula for the polynomials $p_j^{(m)}$.
It is easy to check that there is a constant $C >1$ so that  $\sup_{i,j}|a_{ij}^{(m)}| \leq C^{m}$ and therefore (taking $B=2C$)
	\[
	\sup_{0 < t <1} \vert p_{j}^{(m)} (t) \vert \leq (m+1) C^{m} \leq B^{m}\,.
	\]
	Notice that, if $Q$ is a  tetrahedral polynomial of degree $m$, then
	\[
	P_k(\varepsilon)=\int_{0}^1 P(t\varepsilon) p_{k+1}^{(m)}(t)  d t,
	\]
	for every $0\leq k\leq m$. So we get
	\[
	\big(\mathbb{E}\|P_k(\varepsilon)\|_X^q\big)^{1/q}
	\leq \int_{0}^1\big(\mathbb{E}\|P(t\varepsilon) p_{k+1}^{(m)}(t) \|_X^q\big)^{1/q} d t
	\leq B^m \int_{0}^1\big(\mathbb{E}\|P(t\varepsilon) \|_X^q\big)^{1/q} d t.
	\]
	Now,  \cite[Lemma~3.2.3]{DPGi99} gives
	\[
	\big(\mathbb{E}\|P(t\varepsilon) \|_X^q\big)^{1/q} \le \big(\mathbb{E}\|P(\varepsilon) \|_X^q\big)^{1/q}
	\]
	for every $0\le t\le 1$, and this completes the proof.
\end{proof}

For the last ingredient in the proof of Theorem~\ref{cotimplicapolcot}, we need a rather convoluted description of a polynomial in terms of the parity of the  exponents of the variables.
Fix an even $m\in\N$.
Given $A\subseteq  [n]$ we define
\[\Lambda_A=\{\alpha\in\N_0^n:\ |\alpha|=m, \alpha_i \text{ is odd if and only if } i\in A \}.\]
Since $m$ is even, it is clear that $\Lambda_A\neq \varnothing$ if and only if $A$ has even cardinality between 0 and $m$. In the rest of this discussion we only consider $A$ with $\Lambda_A\neq \varnothing$. Note that for any $\varepsilon\in\{-1,1\}^n$ and $z\in \T^n$, we have
\[(\varepsilon z)^\alpha= \varepsilon_A z^\alpha\] for every $\alpha\in \Lambda_A$, where, as always, $\varepsilon_A=\prod_{i\in A}\varepsilon_i$.

Now, for an  $m$-homogeneous polynomial of $n$ variables
$P(z)=\sum_{|\alpha|=m}x_\alpha z^\alpha$
we write
\[P_A(z)=\sum_{\alpha\in\Lambda_A}x_\alpha z^\alpha.\]
With this notation, we clearly have
\[P(\varepsilon z)=\sum_{A\subseteq  [n]}\varepsilon_A P_A(z)
.\]
As we can see from the expression above,  $P(\varepsilon z)$ regarded as a polynomial on $\varepsilon$ is tetrahedral.
Also, we may write $P(\varepsilon z)$ as the sum of its homogeneous components (as a function of $\varepsilon$). As we have already mentioned, each $A$ considered has even cardinality between 0 and $m$. So, if  we define
$$\mathcal{A}_k=\{A\subseteq  [n]: \ |A|=2k\},$$
we can write
\begin{equation}
\label{eq8}
P(\varepsilon z)
=\sum_{k=0}^{m/2}\sum_{A\in \mathcal{A}_k}\varepsilon_A P_A(z).
\end{equation}
Note that, whenever $i$ belongs to some $A$,  the exponents of $z_i$ are odd for every monomial in  $P_A(z)$.
Also, since $m$ is even, given $\alpha \in \Lambda_A$, we have that $\sum_{i\in A}\alpha_i$ must be even and greater than $|A|=2k$. We then define
$$\Lambda_{A,l}=\{\alpha\in\Lambda_A:\ \sum_{i\in A}\alpha_i=2l\},$$ which allows us to write, for $A\in \mathcal A_k$,
\begin{equation}
\label{eq9}
P_A( z)
=\sum_{l=k}^{m/2}\sum_{\alpha \in \Lambda_{A,l}} x_\alpha z^\alpha=\sum_{l=k}^{m/2}P_{A,l}(z).
\end{equation}
Note that $P_{A,l}(z)$ is the $2l$-homogeneous component of the polynomial $P_A(z)$  regarded as a function of the variables $z_i$ with $i\in A$ (that is, the variables with odd exponents).
In other words, the polynomial $P_{A,l}(z)$ consists of the monomials $x_\alpha z^\alpha$ of $P_A(z)$ where the sum of the odd exponents equals $2l$.

To conclude our description of $P$, for $\alpha\in \Lambda_{A}$ define exponents $\beta,\gamma$ and $1_A$ by
\begin{gather*}
\beta_i=\begin{cases}
0 &\text{if } i\in A
\\\frac{\alpha_i}{2} &\text{if } i\in A^c
\end{cases}, \quad
\gamma_i=\begin{cases}
\frac{\alpha_i-1}{2} &\text{if } i\in A
\\0 &\text{if } i\in A^c
\end{cases} \quad \text{and} \quad
 1_{A,i}=\begin{cases}
1 &\text{if } i\in A
\\0 &\text{if } i\in A^c
\end{cases},
\end{gather*}
for every $1\le i \le n$. Note that $\alpha=2\beta+2\gamma+1_A$ where $\beta\in \N_0^n$ is supported in $A^c$ and $\gamma,1_A\in \N_0^n$ are supported in $A$. Moreover, for $\alpha\in\Lambda_{A,l}$ we have
\begin{align*}
|\beta|&=\sum_{i=1}^n \beta_i=\sum_{i\in A^c} \frac{\alpha_i}{2}
=\frac{|\alpha|}{2}- \sum_{i\in A}\frac{\alpha_i}{2}= \frac{m}{2}- l,
\intertext{and}
|\gamma|&=\sum_{i=1}^n \gamma_i=\sum_{i\in A} \frac{\alpha_i-1}{2}
= \frac{2l-|A|}{2}=l-k.
\end{align*}
Denote the set of all the exponents $\beta$ supported in $A^c$ with $|\beta|=m/2-l$ by $B_{A,l}$ and the set of all the exponents $\gamma$ supported in $A$ with $|\gamma|=l-k$ by $\Gamma_{A,l}$. We get
\begin{equation}
\label{eq14}
P_{A,l}(z)= \sum_{\alpha \in \Lambda_{A,l}} x_\alpha z^\alpha
=\sum_{\gamma \in \Gamma_{A,l}}\sum_{\beta \in B_{A,l}} x_{2\beta+2\gamma+1_A} z^{2\beta+2\gamma+1_A}
=\sum_{\gamma \in \Gamma_{A,l}}\left(\sum_{\beta \in B_{A,l}} x_{2\beta+2\gamma+1_A} z^{2\beta}\right)z^{2\gamma+1_A}.
\end{equation}

Gathering \eqref{eq8}, \eqref{eq9} and \eqref{eq14} we get the full description of $P(\varepsilon z)$ proving the following lemma.

\begin{lemma} \label{lemma4}
 For an even $m\in\N$, an $m$-homogeneous polynomial in $n$ variables
 \[
P(z)=\sum_{|\alpha|=m}x_\alpha z^\alpha,
\]
 and $\varepsilon\in\{-1,1\}^n$ we have
 \[
P(\varepsilon z)
 =\sum_{k=0}^{m/2}\sum_{A\in \mathcal{A}_k}\sum_{l=k}^{m/2}\sum_{\gamma \in \Gamma_{A,l}}\sum_{\beta \in B_{A,l}} x_{2\beta+2\gamma+1_A} \varepsilon_A z^{2\beta+2\gamma+1_A}.
\]
\end{lemma}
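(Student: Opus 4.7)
The plan is to combine the three decompositions \eqref{eq8}, \eqref{eq9}, and \eqref{eq14} already derived in the discussion preceding the statement. Since each of these identities was verified individually there, the proof reduces to substituting them in sequence and checking that the indexing sets line up correctly to cover every monomial of $P(\varepsilon z)$ exactly once.

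First I would start from \eqref{eq8}, which expresses $P(\varepsilon z)$ as a sum over subsets $A\subseteq[n]$ of even cardinality (the set of indices where $\alpha$ has an odd entry), grouped by $k=|A|/2$ running from $0$ to $m/2$. Next I would insert \eqref{eq9} inside each factor $P_A(z)$ with $A\in\mathcal{A}_k$, which further splits $P_A$ according to the value $2l=\sum_{i\in A}\alpha_i$; here $l$ ranges from $k$ (attained when every odd $\alpha_i$ with $i\in A$ equals~$1$) up to $m/2$. Finally, I would apply \eqref{eq14}, which rewrites each $P_{A,l}(z)$ using the reparametrization $\alpha=2\beta+2\gamma+1_A$, with $\beta$ supported on $A^c$ satisfying $|\beta|=m/2-l$ and $\gamma$ supported on $A$ satisfying $|\gamma|=l-k$.

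What remains is to observe that the assignment $\alpha\mapsto(A,k,l,\beta,\gamma)$ given by $A=\{i:\alpha_i\text{ is odd}\}$, $k=|A|/2$, $l=\tfrac{1}{2}\sum_{i\in A}\alpha_i$, $\beta_i=\alpha_i/2$ for $i\in A^c$, and $\gamma_i=(\alpha_i-1)/2$ for $i\in A$, is a bijection between $\{\alpha\in\N_0^n:|\alpha|=m\}$ and the set of tuples parametrizing the right-hand side of the lemma. The inverse map is simply $\alpha=2\beta+2\gamma+1_A$, and the constraint $|\alpha|=m$ corresponds exactly to $2|\beta|+2|\gamma|+|A|=2(m/2-l)+2(l-k)+2k=m$. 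The factor $\varepsilon_A$ comes out cleanly because, for $\alpha\in\Lambda_A$, $(\varepsilon z)^\alpha=\varepsilon_A z^\alpha$ (the even powers of $\varepsilon_i$ contribute $1$, and exactly the indices in $A$ contribute a single factor of $\varepsilon_i$).

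I do not expect any genuine obstacle: the statement is a combinatorial bookkeeping identity whose substantive content is already contained in \eqref{eq8}, \eqref{eq9} and \eqref{eq14}. The value of the lemma lies in packaging these three steps into a single formula in which the dependence on the parity pattern of the exponents of $\alpha$ is made fully explicit, so that the $\varepsilon$-dependence (through $\varepsilon_A$ alone) can later be separated from the $z$-dependence when applying tetrahedral or Walsh-type estimates such as those of Lemmas~\ref{cotawalsh} and~\ref{lemma1}.
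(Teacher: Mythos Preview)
Your proposal is correct and follows exactly the paper's approach: the paper's entire proof consists of the single sentence ``Gathering \eqref{eq8}, \eqref{eq9} and \eqref{eq14} we get the full description of $P(\varepsilon z)$ proving the following lemma.'' Your write-up is in fact more detailed than the original, since you spell out the bijection $\alpha\leftrightarrow(A,k,l,\beta,\gamma)$ explicitly.
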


With the same argument we may deduce a similar formula when $m$ is odd. For every $m$-homogeneous polynomial in $n$ variables $P$ and $\varepsilon\in\{-1,1\}^n$, we get
\[
P(\varepsilon z)
 =\sum_{k=0}^{(m-1)/2}\sum_{A\in \mathcal{A}'_k}\sum_{l=k}^{(m-1)/2}\sum_{\gamma \in \Gamma'_{A,l}}\sum_{\beta \in B'_{A,l}} x_{2\beta+2\gamma+1_A} \varepsilon_A z^{2\beta+2\gamma+1_A},
\]
where
\begin{align*}
\mathcal{A}'_k&=\{A\subseteq  [n]: \ |A|=2k+1\},
\\ \Gamma'_{A,l}&=\big\{\gamma \in \N_0^n:  \sum_{i\in A}\gamma_i=l-k\ \text{ and } \gamma_i=0 \text{ for }i \in A^c\big\}, \text{ and}
\\ B'_{A,l}&=\big\{\beta \in \N_0^n:  \sum_{i\in A}\beta_i=(m-1)/2-l \ \text{ and } \beta_i=0 \text{ for }i \in A\big\}.
\end{align*}

We are now in position to give the proof of Theorem~\ref{cotimplicapolcot}.

\begin{proof}[Proof of Theorem~\ref{cotimplicapolcot}]
First notice that \ref{proposition5-b}~$\Rightarrow$~\ref{proposition5-a} follows immediately by taking $m=1$ in \ref{proposition5-b}.
Next we show that~\ref{proposition5-a}~$\Rightarrow$~\ref{proposition5-c}.
Let $\{ x_{A} \colon A \subseteq [n], \, \vert A \vert \le m \}$ be a family of vectors in $X$. Applying Lemma~\ref{cotawalsh} to the subfamilies $\{ x_{A} \colon A \subseteq [n], \, \vert A \vert =k \}$ for each $0\leq k\leq m$ and denoting $C=4^{1/q}C_{q}(X)$ we get
\[
\sum_{A\subseteq [n]}\|x_A\|^q=\sum_{k=0}^m \sum_{\stackrel{A\subseteq [n]}{|A|=k}}\|x_A\|^q\leq \sum_{k=0}^m C^{qk} \int_{\T^n} \Big\|\sum_{\stackrel{A\subseteq [n]}{|A|=k}} x_A z_A\Big\|^q dz.\]
	Using Lemmas~\ref{lemma1} and~\ref{lemma3} we obtain
	\begin{multline*}
	\sum_{A\subseteq [n]}\|x_A\|^q
	\leq \sum_{k=0}^m ((1+\sqrt{2})C)^{qk} \E \Big\|\sum_{\stackrel{A\subseteq [n]}{|A|=k}} x_A \varepsilon_A\Big\|^q
	\leq B^m\sum_{k=0}^m ((1+\sqrt{2})C)^{qk} \E \Big\|\sum_{A\subseteq [n]} x_A \varepsilon_A\Big\|^q
	\\
	\leq (m+1) ((1+\sqrt{2})BC)^{qm} \E \Big\|\sum_{A\subseteq [n]} x_A \varepsilon_A\Big\|^q \leq (20BC_{q}(X))^{qm} \E \Big\|\sum_{A\subseteq [n]} x_A \varepsilon_A\Big\|^q.
	\end{multline*}
	This gives \eqref{shankar} and completes the argument.\\
It only remains to show that~\ref{proposition5-c}~$\Rightarrow$~\ref{proposition5-b}. As a first step we show that the inequality holds for homogeneous polynomials. Let $C_{\varepsilon}$ be the constant provided by \ref{proposition5-c} and $B$ the constant in Lemma~\ref{lemma3}. Our aim is to show that if $C=\max \{C_{\varepsilon}^{2}, B^{4}\}$, then
\begin{equation} \label{guiness}
\Big( \sum_{\vert \alpha \vert = m} \Vert x_{\alpha} \Vert^{q} \Big)^{1/q}
\leq C^{m} \bigg( \int_{\mathbb{T}^{n}} \Vert P(z) \Vert^{q} dz \bigg)^{1/q}
\end{equation}
for every $m$-homogeneous polynomial $P(z) = \sum_{\vert \alpha \vert = k}  x_{\alpha} z^{\alpha}$ of $n$ variables. We proceed by induction on $m$. The case $m=1$ is the well known equivalence between Rademacher and Steinhaus averages, which is a particular case of Lemma~\ref{lemma1}.
We fix some $m\ge 2$ and suppose that \eqref{guiness} holds for every $k$-homogeneous polynomial with $k < m$.
We may assume that $m$ is even (being the case when $m$ is odd completely analogous). Fix an $m$-homogeneous polynomial in $n$ variables
\[
P(z)=\sum_{|\alpha|=m}x_\alpha z^\alpha.
\]
Since our goal involves estimating an integral of $P(z)$, we take advantage of the rotation invariance and work with $P(\varepsilon z)$, but this requires some preparation.
 For a fixed $1\le k\le m/2$ and $A\subseteq [n]$ with $|A|=2k$, take $k\le l\le m/2$ and define $P_A$ and $P_{A,l}$ as in \eqref{eq9}.
 Intuitively, $P_{A,l}$ detaches the $z_i$'s with odd exponent from the $z_i$'s with even exponent. This enables us to use the inductive hypothesis twice (once for the odd and once for the even part) to assemble the polynomials $P_{A,l}$.
 Let $\T^{A^c}$ denote $|A^c|$ copies of the torus indexed in $A^c$. We get
 \begin{multline*}
 \sum_{\gamma \in \Gamma_{A,l}}\sum_{\beta \in B_{A,l}}
  \|x_{2\beta+2\gamma+1_A}\|^q
\le \sum_{\gamma \in \Gamma_{A,l}}C^{q(m/2-l)}\int_{\T^{A^c}} \Big\|\sum_{\beta \in B_{A,l}} x_{2\beta+2\gamma+1_A} z^{\beta}\Big\|^q dz
 \\
 \le C^{q(m/2-l)}\int_{\T^{A^c}} \sum_{\gamma \in \Gamma_{A,l}}\Big\|\sum_{\beta \in B_{A,l}} x_{2\beta+2\gamma+1_A} z^{\beta}\Big\|^q dz
 \\
 =C^{q(m/2-l)}\int_{\T^{A^c}} \sum_{\gamma \in \Gamma_{A,l}}\Big\|\sum_{\beta \in B_{A,l}} x_{2\beta+2\gamma+1_A} z^{2\beta}\Big\|^q dz,
 \end{multline*}
 where the last step follows by a change of variables. Since $\beta$ is supported in $A^c$, the variables $z_i$ with $i\in A$ do not appear in the expression above. So, we are still able to introduce them by applying the inductive hypothesis again. We obtain
 \begin{equation}
 \label{eq11}
 \begin{split}
 \sum_{\gamma \in \Gamma_{A,l}}\sum_{\beta \in B_{A,l}}
 \|& x_{2\beta+2\gamma+ 1_A}\|^q \\
 &\le C^{q(m/2-l)}C^{q(l-k)}\int_{\T^n}\Big\|\sum_{\gamma \in \Gamma_{A,l}}\Big(\sum_{\beta \in B_{A,l}} x_{2\beta+2\gamma+1_A} z^{2\beta}\Big)z^\gamma\Big\|^q dz
 \\ &= C^{q(m/2-k)}\int_{\T^n}\Big\|\sum_{\gamma \in \Gamma_{A,l}}\Big(\sum_{\beta \in B_{A,l}} x_{2\beta+2\gamma+1_A} z^{2\beta}\Big)z^{2\gamma}\Big\|^q dz
 \\ &= C^{q(m/2-k)}\int_{\T^n}\Big\|z^{1_A}\sum_{\gamma \in \Gamma_{A,l}}\Big(\sum_{\beta \in B_{A,l}} x_{2\beta+2\gamma+1_A} z^{2\beta}\Big)z^{2\gamma}\Big\|^q dz
 \\ &= C^{q(m/2-k)}\int_{\T^n}\|P_{A,l}(z)\|^q dz,
 \end{split}
 \end{equation}
 where in the last step we used \eqref{eq14}. Since $P_{A,l}$ is the $2l$-homogeneous component of $P_A$ regarded as a function depending only on the variables $z_i$ with $i\in A$, we have
 \begin{equation}
 \label{eq12}
  \int_{\T^n}\|P_{A,l}(z)\|^q dz\le \int_{\T^n}\|P_{A}(z)\|^q dz.
 \end{equation}
 From \eqref{eq11} and \eqref{eq12}, we deduce
 \begin{multline*}
  \sum_{l=k}^{m/2}\sum_{\gamma \in \Gamma_{A,l}}\sum_{\beta \in B_{A,l}}
  \|x_{2\beta+2\gamma+1_A}\|^q \le (\frac{m}{2}-k) C^{q(m/2-k)} \int_{\T^n}\|P_{A,l}(z)\|^q dz
  \\
  \le m  C^{q(m/2-k)}\int_{\T^n}\|P_{A}(z)\|^q dz.
 \end{multline*}
 Finally we deal with $P(\varepsilon z)$ using~\eqref{guiness} and \eqref{eq8}. Taking Lemma~\ref{lemma4} into consideration (and the definition of $C$) we get
 \begin{align*}
  \sum_{|\alpha|=m}
  \|x_{\alpha}\|^q =
  \sum_{k=1}^{m/2} & \sum_{A\in\mathcal{A}_k}\sum_{l=k}^{m/2}\sum_{\gamma \in \Gamma_{A,l}}\sum_{\beta \in B_{A,l}}
  \|x_{2\beta+2\gamma+1_A}\|^q \\
  & \le m \int_{\T^n} \sum_{k=1}^{m/2}C^{q(m/2-k)} \sum_{A\in\mathcal{A}_k} \|P_{A}(z)\|^q dz \\
  &  \le m \int_{\T^n} \sum_{k=1}^{m/2} C^{q(m/2-k)}C_\varepsilon^{2qk}
  \E\Big\|\sum_{A\in\mathcal{A}_k}\varepsilon_A P_{A}(z)\Big\|^q dz
  \\
  & \le m C^{qm/2} \int_{\T^n}  \sum_{k=1}^{m/2} \E\Big\|\sum_{A\in\mathcal{A}_k}\varepsilon_A P_{A}(z)\Big\|^q dz.
\end{align*}
Using now Lemma~\ref{lemma3}, we have (note that $m^{2} \leq B^{qm}$ for every $m$, since $B>2$ )
\begin{multline*}
\sum_{|\alpha|=m}
  \|x_{\alpha}\|^q
 \le m C^{qm/2} B^{qm} \int_{\T^n} \sum_{k=1}^{m/2}  \E\|P(\varepsilon z)\|^q dz \\
\le m^2 C^{qm/2} B^{qm} \E\int_{\T^n} \|P(\varepsilon z)\|^q dz
\le C^{qm} \int_{\T^n} \|P(z)\|^q dz \,.
\end{multline*}
Hence \eqref{guiness} (and then \eqref{eq3}) holds for every $m$-homogeneous polynomial. To finish the argument, take an arbitrary polynomial of degree $m$
\[
P(z)=\sum_{|\alpha|\le m}x_\alpha z^\alpha.
\]
For $0\le k\le m$, denote by $P_k$ its $k$-homogeneous projection. We have
 \begin{align*}
 \sum_{|\alpha|\le m}\|x_\alpha\|^q
 &= \sum_{k=0}^m \sum_{|\alpha|=k}\|x_\alpha\|^q
 \leq \sum_{k=0}^m C^{qk} \int_{\T^n} \|P_k(z)\|^q dz
 \leq \sum_{k=0}^m C^{qk} \int_{\T^n} \|P(z)\|^q dz
 \\ &\leq (m+1) C^{qm} \int_{\T^n} \|P(z)\|^q dz
 \leq (2C)^{qm} \int_{\T^n} \|P(z)\|^q dz .
 \end{align*}
This concludes the argument.
\end{proof}

\bibliographystyle{abbrv}
\bibliography{biblio_CaMaSP}

\end{document}